%
%
%
%
%
\RequirePackage{fix-cm}
\documentclass[smallextended]{svjour3}       
\smartqed  
\usepackage{graphicx}
%
%
\usepackage{graphicx}
\usepackage{graphicx,amssymb,amsmath}
\usepackage{epsfig}
\usepackage{subfigure}
\usepackage{cite}
\usepackage{booktabs}
\usepackage{array}
\usepackage{color}
\usepackage{enumerate}
\usepackage{algorithm}
\usepackage{algorithmic}
\usepackage{float}
\usepackage{multirow}
\usepackage{rotating}
\usepackage[misc]{ifsym}
\usepackage{bm}
\usepackage{mathrsfs}
\usepackage[pdfstartview=FitH,colorlinks,pdfborder=001,linkcolor=blue,anchorcolor=blue,citecolor=blue]{hyperref}
\usepackage[top=1.3in, bottom=1.0in, left=1.30in, right=1.30in]{geometry}
\usepackage{mathrsfs}
\usepackage[misc]{ifsym} 
\newtheorem{assumption}{Assumption}

%
\begin{document}

\title{An FE-dABCD algorithm for elliptic optimal control problems with constraints on the gradient of the state and control.\thanks{This study was funded by the National Natural Science Foundation of China (Nos. 11571061, 11401075, 11701065, 11501079), China Postdoctoral Science Foundation (No. 2018M632018) and the Fundamental Research Funds for the Central Universities (No. DUT16LK36).}
}

\titlerunning{An FE-dABCD algorithm for optimal control problems with constraints on state and control}

\author{Zixuan Chen$^{1}$         \and
        Xiaoliang Song$^{1}$\and
        Bo Yu$^{1,2}$\and
        Xiaotong Chen$^{1}$ 
}

\authorrunning{Zixuan Chen et al.}

\institute{Zixuan Chen\at
               \email{chenzixuan@mail.dlut.edu.cn}
               \and
              Xiaoliang Song\ \Letter \at
              \email{songxiaoliang@mail.dlut.edu.cn}
              \and
              Bo Yu\at
              \email{yubo@dlut.edu.cn}
              \and
              Xiaotong Chen\at
              \email{chenxiaotong@mail.dlut.edu.cn}
              \and
              $1.$\ School of Mathematical Sciences, Dalian University of Technology, Dalian 116025, Liaoning, China\\
              $2.$\ School of Mathematical and Physics Science, Dalian University of Technology, Panjin 124200, Liaoning, China
}

\date{Received: date / Accepted: date}

\maketitle

\begin{abstract}
In this paper, elliptic control problems with integral constraint on the gradient of the state and box constraints on the control are considered. The optimal conditions of the problem are proved. To numerically solve the problem, we use the {\emph{First discretize, then optimize}} approach. Specifically, we discretize both the state and the control by piecewise linear functions. To solve the discretized problem efficiently, we first transform it into a multi-block unconstrained convex optimization problem via its dual, then we extend the inexact majorized accelerating block coordinate descent (imABCD) algorithm to solve it. The entire algorithm framework is called finite element duality-based inexact majorized accelerating block coordinate descent (FE-dABCD) algorithm. Thanks to the inexactness of the FE-dABCD algorithm, each subproblems are allowed to be solved inexactly. For the smooth subproblem, we use the generalized minimal residual (GMRES) method with preconditioner to slove it. For the nonsmooth subproblems, one of them has a closed form solution through introducing appropriate proximal term, another is solved combining semi-smooth Newton (SSN) method. Based on these efficient strategies, we prove that our proposed FE-dABCD algorithm enjoys $O(\frac{1}{k^2})$ iteration complexity. Some numerical experiments are done and the numerical results show the efficiency of the FE-dABCD algorithm.
\keywords{Optimal control \and integral state constraint \and optimal conditions \and FE-dABCD}
\subclass{49J20 \and 49N05 \and 68W01}
\end{abstract}

\section{Introduction}\label{Introduction}
Optimal control problems with constraints on the gradient of the state have a wide range of important applications. For instance, in cooling processes or structured optimization when high stresses have to be avoided. In cooling processes, constraints on the gradient of the state play an important role in practical applications where solidification of melts forms a critical process. In order to accelerate the production, it is highly desirable to speed up the cooling processes while avoiding damage of the products caused by large material stresses. Cooling frequently is described by systems of partial differential equations involving the temperature as a system variable, so that large (Von Mises) stresses in the optimization can be kept small by imposing bounds on the gradient of the temperature (see \cite{Wollner2013A, Nther2011Elliptic}).

Optimal control problems with constraints on the gradient of the state have caused much attention. For optimal control problems with pointwise constraints on the gradient of the state, there are some existing works on the optimal conditions \cite{Casas1993Optimal}, discretization \cite{Deckelnick2014A} and error analysis \cite{Wollner2013A, Nther2011Elliptic, Deckelnick2014A}. Since the Lagrange multiplier corresponding to the pointwise constraint on the gradient of the state in general only represents a regular Borel measure (see \cite{Casas1993Optimal}), the complementarity condition in the optimal conditions cannot be written into a pointwise form, which brings some difficulties and reduces flexibility of the numerical realization. As we know, although this difficulty can be solved by some common regularization approaches such as Lavrentiev regularization \cite{Meyer2006Optimal, Pr2007On, Chen2016A} and Moreau-Yosida regularization \cite{hintermuller2006feasible, hintermuller2006path, Itoa2003Semi, Kunisch2010Optimal}, the efficiency of these regularization approaches usually depends on the choice of the regularization parameters, which will also bring some difficulties. To relax the pointwise constraint, in this paper, as a model problem we consider the following elliptic PDE-constrained optimal control problem with box constraints on the control and integral constraint on the gradient of state.


\begin{equation}\label{original problem}
           \qquad \left\{ \begin{aligned}
        \min\limits_{(y,u)\in Y\times U}^{} \ \ J(y,u)&=\frac{1}{2}\|y-y_d\|_{L^2(\mathrm{\Omega})}^{2}+\frac{\alpha}{2}\|u\|_{L^2(\mathrm{\Omega})}^{2} \\
        {\mathrm{s.t.}}\qquad\quad\ \ \mathcal{A} y&=u+f\quad\mathrm{in}\  \mathrm{\Omega}, \\
         y&=0\qquad\quad\mathrm{on}\ \mathrm{\Gamma},\\
         \nabla y&\in \mathcal{K},\\
         u&\in \mathcal{W},
                          \end{aligned} \right.\tag{$\mathrm{P}$}
\end{equation}
where  $Y:=H_0^1(\mathrm{\Omega})$, $U:=L^2(\mathrm{\Omega})$, $\mathrm{\Omega}\subseteq \mathbb{R}^n\ (n=2,3)$ is a convex, open and bounded domain with $C^{1,1}$- or polygonal boundary $\mathrm{\Gamma}$; the desired state $y_d \in L^2(\mathrm{\Omega})$ and $f \in L^2(\mathrm{\Omega})$ are given; $a,\ b\in \mathbb{R}$ and $\alpha>0$ are given parameters; $\mathcal{K}=\{\mathbf{z}\in {L^{2}(\bar{\mathrm{\Omega}})}^{n}|\int_{\mathrm{\Omega}}|\mathbf{z}(x)|^2dx\leq\delta\}$ is a closed convex subset of ${L^{2}(\mathrm{\Omega})}^{n}$ with nonempty interior, $\mathcal{W}=\{v\in L^{\infty}(\mathrm{\Omega})|\ a\leq v \leq b\quad\mathrm{a.e.}\ x \in \mathrm{\Omega}\}$ is a nonempty convex closed subset of $U$; $\mathcal{A}$ is a uniformly elliptic operator
\begin{equation}\label{elliptic operator}
(\mathcal{A}y)(x):=-\sum^{n}_{i,j=1}\partial_{x_j}(a_{ij}y_{x_i})+c_0y,
\end{equation}
where $a_{ij}$, $c_0\in L^{\infty}(\mathrm{\Omega})$, $c_0\geq0$, $a_{ij}=a_{ji}$ and there is a constant $\theta>0$ such that
\begin{equation*}
\sum_{i,j=1}^{n}a_{ij}(x)\xi_i\xi_j\geq\theta\|\xi\|^2\quad \mathrm{for\ a.a.}\ x\in\mathrm{\Omega}\ \mathrm{and}\ \xi\in\mathbb{R}^n.
\end{equation*}
We use $|\cdot|$ to denote the Euclidean norm, use ${\rm{(\cdot,\cdot)}}$ to denote the inner product in $L^2(\mathrm{\Omega})$ and use $\|\cdot\|$ to denote the corresponding norm.

\begin{remark}
Although we assume that the operator $\mathcal{A}$ is a uniformly elliptic operator and Dirichlet boundary condition $y=0$ holds, we would like to point out that our considerations can also carry over to parabolic operators and more general boundary conditions of Robin type
\begin{equation*}
\partial_n y+\gamma y=g\qquad \mathrm{on}\ \mathrm{\Gamma},
\end{equation*}
where $g\in L^2(\mathrm{\Gamma})$ is given and $\gamma\in L^{\infty}(\mathrm{\Gamma})$ is a nonnegative coefficient.
\end{remark}

To numerically solve problem (\ref{original problem}), there are two possible ways. One is called {\emph{First discretize, then optimize}}, another is called {\emph{First optimize, then discretize}} \cite{Collis2002Analysis}. Independently of where discretization is located, the resulting finite dimensional equations are quite large. Hence, both cases require us to consider proposing an efficient algorithm based on the structure of the problem. In this paper, we use the {\emph{First discretize, then optimize}} approach. With respect to the discrete methods, we use the full discretization method, in which both the state and control are discretized by piecewise linear functions. 

As we know, there are many first order algorithms being used to solve finite dimensional large scale optimization fast, such as iterative soft thresholding algorithms (ISTA) \cite{Blumensath2008Iterative}, fast iterative soft thresholding algorithms (FISTA) \cite{Beck2009A}, accelerated proximal gradient (APG)-based method \cite{jiang2012inexact, toh2010accelerated} and alternating direction method of multipliers (ADMM) \cite{li2015qsdpnal, li2016schur, Chen2016A}. Motivated by the success of these first order algorithms, an APG method in function space (called Fast Inexact Proximal (FIP) method) was proposed to solve the elliptic optimal control problem involving $L^1$-control cost in \cite{schindele2016proximal}. It is known that whether the APG method is efficient depends closely on whether the step-length is close enough to the Lipschitz constant, however, the Lipschitz constant is not easy to estimate in usual, which largely limits the efficiency of APG method. Recently, an inexact heterogeneous ADMM (ihADMM) algorithm was proposed and used to solve optimal control problems with $L^{1}$-control cost in \cite{Song2017A}. Simultaneously, the authors also extended it to optimal control problems with $L^{2}$-control cost in \cite{Song2016A} and Lavrentiev-regularized state-constrained optimal control problem in \cite{Chen2016A}. It is known that its iteration scheme is simple and each subproblem can be solved efficiently. However, ihADMM algorithm only has $O(\frac{1}{k})$ iteration complexity.

Most of the papers mentioned above are devoted to solving the primal problem. However, Song et al. \cite{Song2018An} proposed an duality-based approach for PDE-constrained sparse optimization, which reformulated the problem as a multi-block unconstrained convex composite minimization problem and proposed a sGS-imABCD algorithm to solve the problem. Motivated by it, we consider solving (\ref{original problem}) via its dual. We find that we can also reformulate our problem as a multi-block unconstrained convex optimization problem and take advantage of the structure of it to construct an efficient algorithm to solve it efficiently and fast. Specifically, it is shown in Section \ref{A duality-based approach} that the dual problem of discretization version of (\ref{original problem}) can be written as
\begin{equation}\label{introduction: dual problem}
\begin{aligned}
\min\limits_{p,\lambda,\mu}\ F(p,\lambda,\mu)=&\frac{1}{2}\|{K_h}^Tp+\lambda-M_hy_d\|_{{M_h^{-1}}}^2+\frac{1}{2\alpha}\|M_hp-\mu\|_{{M_h^{-1}}}^2\\
&+\delta^*_{\mathcal{C}}(\lambda)+\delta^*_{\mathcal{S}}(\mu)+(M_hf,p)-\frac{1}{2}\|y_d\|_{M_h}^2,
\end{aligned}
\end{equation}
where $p$, $\lambda$, $\mu\in {\mathbb{R}}^n$ and for any given nonempty, closed convex subset $B$ of ${\mathbb{R}}^n$, $\delta_B(\cdot)$ denotes the indicator function of $B$. That is to say
\begin{equation}
 \delta_B(x)=\left\{ \begin{aligned}
         &1,\qquad &x\in B,\\
         &\infty,&x\notin B.
\end{aligned} \right.
\end{equation}
Based on inner product, the conjugate of $\delta_B(\cdot)$ is defined as follows
\begin{equation}
\begin{aligned}
\delta_B^*(y)&=\sup_{x}\ \{(y,x)-\delta_B(x)\},\\
&=\sup_{x\in B}\ (y,x).
\end{aligned}
\end{equation}

It is easy to see that (\ref{introduction: dual problem}) is a multi-block unconstrained minimization problem including two non-smooth terms, which are decoupled. In this case, block coordinate descent (BCD) method (see \cite{Grippo2000On, Sardy2000Block, Tseng1993Dual, Tseng2001Convergence}) is top-priority and appropriate. Combining BCD method and the acceleration technique in APG will result in accelerated block coordinate descent (ABCD) method. It is a natural idea that we use ABCD method to solve (\ref{introduction: dual problem}), however, the convergence property of $3$ block ABCD method can not be promised (see \cite{Chambolle2015A}). Fortunately, in our problem, $\lambda$ and $\mu$ can be seen as one block because of the important fact that $\lambda$ and $\mu$ are decoupled. Then (\ref{introduction: dual problem}) can be seen as an unconstrained convex optimization problem with coupled objective functions of the following form
\begin{equation}\label{Cui: multi-block}
\min_{v,w}\ f(v)+g(w)+\phi(v,w).
\end{equation}

For unconstrained convex optimization problems with coupled objective functions of form (\ref{Cui: multi-block}), an accelerated alternative descent (AAD) algorithm was proposed in \cite{Chambolle2015A} to solve it for the situation that the joint objective function $\phi$ is quadratic. However, the AAD method does not take the inexactness of the solutions of the associated subproblems into account. It is known that, in some case, exactly computing the solution of each subproblem is either impossible or extremely expensive. Sun et al. \cite{Sun2015An} proposed an inexact accelerated block descent (iABCD) method to solve least squares semidefinite programming (LSSDP) via its dual, whose basic idea is first reducing the two block nonsmooth terms into one through applying the Danskin-type theorem and then using APG method to solve the reduced problem. However, for the situation that the subproblem with respect to $w$ could not be solved exactly, Danskin-type theorem can no longer be used to reduce two block nonsmooth terms into one.

To overcome the bottlenecks above, an inexact majorized accelerated block coordinate descent (imABCD) method was proposed in {\cite[Chapter 3]{cui2016}}. Under suitable assumptions and certain inexactness criteria, the author proved that the imABCD method enjoys $O(\frac{1}{k^2})$ iteration complexity. Motivated by its success, in this paper, we propose a finite element duality-based inexact majorized accelerating block coordinate descent (FE-dABCD) algorithm. One distinctive feature of our proposed method is that it employs a majorization technique, which gives us a lot of freedoms to flexibly choose different proximal terms for different subproblems. Moreover, thanks to the inexactness of our proposed method, we have the essential flexibility that the inner subproblems are allowed to be solved only approximately. We would like to emphasize that these flexibilities are essential. First, the flexibility of choosing proximal terms makes each problem maintain good structure and can be solved efficiently. In addition, with some simple and implementable error tolerance criteria, the cost for inexactly solving the subproblems can be greatly reduced, which further contributes to the efficiency of the proposed method. Specifically, we can see from the content in Section \ref{An efficient ABCD algorithm} that the smooth subproblem, i.e. $p$-subproblem, is a $2*2$ block saddle point system, which can be solved efficiently by some Krylov-based methods with preconditioner, such as the generalized minimal residual (GMRES) method with preconditioner. As for the nonsmooth subproblems, i.e. subproblems with regard to $\lambda$ and $\mu$, the $\lambda$-subproblem has a closed form solution through introducing an appropriate proximal term and the $\mu$-subproblem is solved by combining semi-smooth Newton (SSN) method efficiently. Moreover, the $O(\frac{1}{k^2})$ iteration complexity of the FE-dABCD algorithm is also proved.

The rest of this article is structured as follows. In Section {\ref{Optimal Conditions}}, we will discuss the optimal conditions of problem (\ref{original problem}). Then we will consider its discretization in Section {\ref{Discretization}}. Section \ref{A FE-ABCD algorithm} will give a duality-based approach to transform the discretized problem into a multi-block unconstrained convex optimization problem. Then a brief sketch of the imABCD method will be given and our proposed FE-dABCD algorithm will be described in details. Some numerical experiments will be given in Section \ref{Numerical Experiment} to verify the efficiency of the proposed algorithm. Finally, we will make a simple summary in Section \ref{Conclusion}.

\section{Optimal Conditions}\label{Optimal Conditions}
For the existence and uniqueness of the solution of the PDE equation in problem (\ref{original problem})
\begin{equation}\label{pde equation}
 \left\{ \begin{aligned}
         \mathcal{A} y&=u+f\quad\mathrm{in}\  \mathrm{\Omega}, \\
         y&=0\qquad\quad\mathrm{on}\ \mathrm{\Gamma},
                          \end{aligned} \right.
\end{equation}
where $\mathcal{A}$ is defined by (\ref{elliptic operator}), the following theorem holds.
\begin{theorem}{\rm\cite[Theorem 1.23]{Hinze2009Optimization}}\label{Existence Theorem}
For each $u\in L^{2}(\mathrm{\Omega})$, there exists a unique weak solution $y_u\in H^{1}_{0}(\mathrm{\Omega})$ of {\rm(\ref{pde equation})} and satifies
\begin{equation*}
\|y_u\|_{H^1(\mathrm{\Omega})}\leq C_1(\|u\|_{L^2(\mathrm{\Omega})}+\|f\|_{L^2(\mathrm{\Omega})}),
\end{equation*}
where $C_1$ depends only on $a_{ij}$, $c_0$, $\mathrm{\Omega}$.
\end{theorem}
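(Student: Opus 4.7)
The plan is to cast (\ref{pde equation}) in weak form and invoke the Lax--Milgram theorem. Specifically, associated to the operator $\mathcal{A}$ defined in (\ref{elliptic operator}) is the bilinear form $a:H_0^1(\mathrm{\Omega})\times H_0^1(\mathrm{\Omega})\to\mathbb{R}$ given by
\begin{equation*}
a(y,v)=\sum_{i,j=1}^n\int_\mathrm{\Omega}a_{ij}(x)\,y_{x_i}v_{x_j}\,dx+\int_\mathrm{\Omega}c_0(x)\,yv\,dx,
\end{equation*}
and the weak formulation reads: find $y_u\in H_0^1(\mathrm{\Omega})$ such that $a(y_u,v)=(u+f,v)$ for all $v\in H_0^1(\mathrm{\Omega})$. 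The proof then reduces to verifying the three hypotheses of Lax--Milgram for this form.

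First I would check continuity of $a(\cdot,\cdot)$: the $L^\infty$ bounds on the coefficients $a_{ij}$ and $c_0$, together with the Cauchy--Schwarz inequality, immediately yield $|a(y,v)|\leq C\|y\|_{H^1(\mathrm{\Omega})}\|v\|_{H^1(\mathrm{\Omega})}$ for a constant $C$ depending only on $\|a_{ij}\|_{L^\infty}$ and $\|c_0\|_{L^\infty}$. Next, for coercivity I would use the uniform ellipticity hypothesis together with $c_0\ge 0$ to obtain $a(v,v)\geq\theta\,|v|_{H^1(\mathrm{\Omega})}^2$, and then invoke the Poincar\'e inequality on $H_0^1(\mathrm{\Omega})$ (which applies since $\mathrm{\Omega}$ is bounded) to upgrade this semi-norm bound to $a(v,v)\geq \tilde\theta\,\|v\|_{H^1(\mathrm{\Omega})}^2$ for some $\tilde\theta>0$ depending on $\theta$ and the Poincar\'e constant. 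Finally, the linear functional $v\mapsto(u+f,v)$ is clearly bounded on $H_0^1(\mathrm{\Omega})$ with norm at most $\|u\|_{L^2(\mathrm{\Omega})}+\|f\|_{L^2(\mathrm{\Omega})}$.

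Having verified the hypotheses, Lax--Milgram yields the existence and uniqueness of $y_u\in H_0^1(\mathrm{\Omega})$. The a priori estimate then follows from the standard energy argument: testing the weak equation with $v=y_u$ gives
\begin{equation*}
\tilde\theta\,\|y_u\|_{H^1(\mathrm{\Omega})}^2\leq a(y_u,y_u)=(u+f,y_u)\leq\bigl(\|u\|_{L^2(\mathrm{\Omega})}+\|f\|_{L^2(\mathrm{\Omega})}\bigr)\|y_u\|_{L^2(\mathrm{\Omega})},
\end{equation*}
and absorbing one power of $\|y_u\|_{H^1(\mathrm{\Omega})}$ (using $\|y_u\|_{L^2(\mathrm{\Omega})}\leq\|y_u\|_{H^1(\mathrm{\Omega})}$) produces the claimed inequality with $C_1:=1/\tilde\theta$.

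There is no genuine obstacle here since the result is entirely standard; the only place that requires a bit of care is the coercivity step, where one must remember that $c_0$ may vanish, so coercivity in the full $H^1$-norm relies on the Poincar\'e inequality and hence on the Dirichlet boundary condition built into $H_0^1(\mathrm{\Omega})$. Because the bounds used involve only $\|a_{ij}\|_{L^\infty}$, $\|c_0\|_{L^\infty}$, $\theta$ and the Poincar\'e constant (which depends on $\mathrm{\Omega}$), the dependence of $C_1$ stated in the theorem is transparent. Consequently, rather than reproving this well-established result, I would simply cite \cite[Theorem 1.23]{Hinze2009Optimization} once the weak formulation and functional-analytic set-up are in place.
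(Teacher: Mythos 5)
Your argument is correct and is precisely the standard Lax--Milgram proof underlying the cited result \cite[Theorem 1.23]{Hinze2009Optimization}; the paper itself states this theorem without proof, simply citing that reference, so there is nothing to compare beyond noting that your verification of continuity, coercivity (via uniform ellipticity, $c_0\ge 0$ and Poincar\'e on $H_0^1(\mathrm{\Omega})$) and the energy estimate is the expected route. Your closing remark that one would in practice just cite the reference matches exactly what the paper does.
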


The weak formulation of (\ref{pde equation}) is given by
\begin{equation}
{\rm{Find}}\ y\in H^1_0(\mathrm{\Omega}):\quad a(y,v)=(u+f,v)_{L^2(\mathrm{\Omega})}\quad\forall v\in H^1_0(\mathrm{\Omega})
\end{equation}
with the bilinear form
\begin{equation}\label{bilinear form}
a(y,v)=\int_{\mathrm{\Omega}}\ \left(\sum^{n}_{i,j=1}a_{ij}y_{x_i}v_{x_j}+c_0yv\right)dx.
\end{equation}

The differentiability of the relation between the control and the state can be readily deduced from the implicit function theorem.

\begin{theorem}{\rm\cite[Theorem 2]{Casas1993Optimal}}\label{Differentiability Theorem}
The mapping $F:L^2(\mathrm{\Omega})\rightarrow H^1_0(\mathrm{\Omega})$ defined by $F(u)=y_u$ is of class $C^1$ and for every $u$, $v\in L^2(\mathrm{\Omega})$ the element $z=DF(u)\cdot v$ is the unique solution of the Dirichlet problem
\begin{equation}\label{state equation}
\left\{\begin{aligned}
   \mathcal{A} z&=v \qquad \mathrm{in}\  \mathrm{\Omega}, \\
   z&=0 \qquad \mathrm{on}\ \mathrm{\Gamma}.
\end{aligned}\right.
\end{equation}
\end{theorem}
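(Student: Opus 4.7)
The plan is to invoke the implicit function theorem in Banach spaces, as suggested by the statement itself. Define the mapping
\begin{equation*}
e: L^2(\mathrm{\Omega}) \times H^1_0(\mathrm{\Omega}) \longrightarrow H^{-1}(\mathrm{\Omega}), \qquad e(u,y) := \mathcal{A}y - u - f,
\end{equation*}
so that the state equation \eqref{pde equation} is equivalent to $e(u,y)=0$, and hence $e(u,F(u))=0$ for every $u\in L^2(\mathrm{\Omega})$. Since $e$ is affine and continuous in $(u,y)$, it is of class $C^\infty$, with partial Fréchet derivatives
\begin{equation*}
\partial_y e(u,y) = \mathcal{A}\in\mathcal{L}(H^1_0(\mathrm{\Omega}),H^{-1}(\mathrm{\Omega})), \qquad \partial_u e(u,y) \cdot v = -v.
\end{equation*}

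The main technical point to verify is that $\partial_y e(u,y)=\mathcal{A}$ is a topological isomorphism from $H^1_0(\mathrm{\Omega})$ onto $H^{-1}(\mathrm{\Omega})$. Injectivity and surjectivity follow directly from Theorem \ref{Existence Theorem}: given any right-hand side $g\in H^{-1}(\mathrm{\Omega})$, the bilinear form $a(\cdot,\cdot)$ defined in \eqref{bilinear form} is bounded on $H^1_0(\mathrm{\Omega})\times H^1_0(\mathrm{\Omega})$ and coercive thanks to the uniform ellipticity constant $\theta>0$ together with $c_0\ge 0$; thus the Lax–Milgram lemma (equivalently, the existence and uniqueness result of Theorem \ref{Existence Theorem}) yields a unique weak solution $y\in H^1_0(\mathrm{\Omega})$ with $\|y\|_{H^1(\mathrm{\Omega})}\le C\|g\|_{H^{-1}(\mathrm{\Omega})}$. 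Hence $\mathcal{A}^{-1}\in\mathcal{L}(H^{-1}(\mathrm{\Omega}),H^1_0(\mathrm{\Omega}))$ exists by the bounded inverse theorem.

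With these hypotheses in place, the Banach space implicit function theorem applies: $F$ is of class $C^1$ in a neighbourhood of every $u$, and
\begin{equation*}
DF(u) = -\bigl[\partial_y e(u,F(u))\bigr]^{-1}\partial_u e(u,F(u)) = \mathcal{A}^{-1},
\end{equation*}
so $z=DF(u)\cdot v = \mathcal{A}^{-1}v$ is precisely the unique weak solution in $H^1_0(\mathrm{\Omega})$ of the linearised Dirichlet problem \eqref{state equation}. Uniqueness of $z$ is inherited from Theorem \ref{Existence Theorem} applied with right-hand side $v\in L^2(\mathrm{\Omega})\hookrightarrow H^{-1}(\mathrm{\Omega})$ and $f\equiv 0$.

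I do not anticipate serious obstacles: because $e$ is globally affine, $DF$ is in fact constant and $F$ is $C^\infty$, so the only real content is checking the isomorphism property of $\mathcal{A}$, which is exactly the well-posedness granted by Theorem \ref{Existence Theorem}. Alternatively, one could bypass the implicit function theorem entirely and argue directly: writing $F(u+tv)-F(u) = t\,\mathcal{A}^{-1}v$ by linearity of the state equation, the difference quotient converges in $H^1_0(\mathrm{\Omega})$-norm to $\mathcal{A}^{-1}v$, which shows Gâteaux differentiability, and continuity of $u\mapsto DF(u)=\mathcal{A}^{-1}$ (being constant) upgrades this to Fréchet differentiability of class $C^1$.
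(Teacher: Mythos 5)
Your argument is correct and follows exactly the route the paper indicates: the paper states only that the result ``can be readily deduced from the implicit function theorem'' (citing Casas--Fern\'andez) and gives no further detail, and your proposal supplies precisely that deduction, with the isomorphism property of $\mathcal{A}:H^1_0(\mathrm{\Omega})\to H^{-1}(\mathrm{\Omega})$ secured by Lax--Milgram as in Theorem~\ref{Existence Theorem}. Your closing observation that in this linear setting the map is affine, so $F$ is actually $C^\infty$ with constant derivative $\mathcal{A}^{-1}$ and the implicit function theorem can be bypassed, is also accurate.
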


Taking a minimizing sequence and arguing in the standard way, we obtain the existence of a solution for the optimal control problem (\ref{original problem}):

\begin{theorem}{\rm\cite[Theorem 1.43]{Hinze2009Optimization}}\label{Unique Theorem}
Assuming the existence of a feasible control {\rm(}i.e. a control $u\in \mathcal{W}$ such that $\nabla y\in \mathcal{K}${\rm)}, then problem {\rm(\ref{original problem})} has a unique solution.
\end{theorem}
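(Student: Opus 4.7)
The plan is to apply the direct method of the calculus of variations to the reduced problem. By Theorem \ref{Existence Theorem}, the control-to-state map $S\colon L^2(\mathrm{\Omega})\to H^1_0(\mathrm{\Omega})$ given by $S(u)=y_u$ is well defined, affine, and continuous. I introduce the reduced cost $j(u):=J(S(u),u)$ on the admissible set
\begin{equation*}
\mathcal{U}_{\mathrm{ad}}:=\{u\in \mathcal{W}\,:\,\nabla S(u)\in \mathcal{K}\},
\end{equation*}
which is nonempty by assumption, so the infimum $j^\ast:=\inf_{u\in \mathcal{U}_{\mathrm{ad}}}j(u)$ lies in $[0,\infty)$. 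I then pick a minimizing sequence $\{u_k\}\subset \mathcal{U}_{\mathrm{ad}}$.

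The coercivity comes from the Tikhonov term: $\tfrac{\alpha}{2}\|u_k\|^2\le j(u_k)\to j^\ast$ with $\alpha>0$ shows that $\{u_k\}$ is bounded in $L^2(\mathrm{\Omega})$, so up to a subsequence $u_k\rightharpoonup u^\ast$ weakly in $L^2(\mathrm{\Omega})$. The next step is to verify feasibility of $u^\ast$. Since $\mathcal{W}$ is convex and closed in $L^2(\mathrm{\Omega})$, it is weakly closed, so $u^\ast\in \mathcal{W}$. By the continuity and affineness of $S$, the map $u\mapsto \nabla S(u)$ is affine and continuous from $L^2(\mathrm{\Omega})$ to $L^2(\mathrm{\Omega})^n$, hence weak-to-weak continuous, which gives $\nabla S(u_k)\rightharpoonup \nabla S(u^\ast)$ in $L^2(\mathrm{\Omega})^n$; because $\mathcal{K}$ is convex and closed, it is weakly closed, so $\nabla S(u^\ast)\in \mathcal{K}$. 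Therefore $u^\ast\in \mathcal{U}_{\mathrm{ad}}$.

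For optimality, I use weak lower semicontinuity. The Rellich--Kondrachov embedding $H^1_0(\mathrm{\Omega})\hookrightarrow L^2(\mathrm{\Omega})$ is compact, and the a priori estimate in Theorem \ref{Existence Theorem} yields $\{S(u_k)\}$ bounded in $H^1_0(\mathrm{\Omega})$, so (up to a further subsequence) $S(u_k)\to S(u^\ast)$ strongly in $L^2(\mathrm{\Omega})$, giving $\|S(u_k)-y_d\|^2\to\|S(u^\ast)-y_d\|^2$. The $L^2$-norm of $u$ is convex and continuous, hence weakly lower semicontinuous, so $\|u^\ast\|^2\le \liminf_k\|u_k\|^2$. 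Combining these yields $j(u^\ast)\le \liminf_k j(u_k)=j^\ast$, hence $u^\ast$ is a minimizer and $y^\ast=S(u^\ast)$ attains the minimum of problem (\ref{original problem}).

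Uniqueness follows from strict convexity: $S$ is affine, so $u\mapsto \|S(u)-y_d\|^2$ is convex, and the term $\tfrac{\alpha}{2}\|u\|^2$ with $\alpha>0$ is strictly convex, making $j$ strictly convex on the convex set $\mathcal{U}_{\mathrm{ad}}$. I expect the main technical point to be the preservation of the integral-gradient constraint $\nabla y\in \mathcal{K}$ under weak convergence of the controls; this hinges on the linear continuity of $u\mapsto \nabla S(u)$ together with the convexity and closedness of $\mathcal{K}$ in $L^2(\mathrm{\Omega})^n$, both of which are available from the given setup.
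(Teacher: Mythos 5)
Your proof is correct and follows exactly the route the paper indicates: it cites the result and remarks that it is obtained by ``taking a minimizing sequence and arguing in the standard way,'' which is precisely the direct method you carry out (coercivity from the Tikhonov term, weak closedness of $\mathcal{W}$ and of the gradient constraint via the affine continuity of the control-to-state map, weak lower semicontinuity, and strict convexity for uniqueness). No gaps; your write-up simply supplies the details the paper leaves to the cited reference.
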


To prove the optimality conditions for (\ref{original problem}), we first introduce the following theorem about the existence of Lagrange multiplier.
\begin{theorem}{\rm\cite[Theorem 5.2]{Casas1993BOUNDARY}}\label{Original Theorem}
Let $U$ and $Z$ be two Banach spaces and let $K\subset U$ and $C\subset Z$ be two convex subsets, $C$ having a nonempty interior. Let $\bar{u}\in K$ be a solution of the optimization problem:
\begin{equation*}
\left\{\begin{aligned}
&\min\ J(u)\\
&u\in K \quad {\rm{and}}\quad G(u)\in C,
\end{aligned}\right.
\end{equation*}
where $J:U\rightarrow(-\infty,+\infty]$ and $G:U\rightarrow Z$ are G{\rm{$\hat{a}$}}teaux differentiable at $\bar{u}$. Then there exist a real number $\bar{\lambda}\geq0$ and an element $\bar{\mu}\in Z'$ such that
\begin{subequations}
\begin{eqnarray}
&&\bar{\lambda}+\|\bar{\mu}\|_{Z^{'}}>0,\\\label{Original Theorem a}
&&\langle\bar{\mu},z-G(\bar{u})\rangle\ \leq\ 0,\quad \forall z\in C,\\\label{Original Theorem b}
&&\langle\bar{\lambda}J'(\bar{u})+[DG(\bar{u})]^{*}\bar{\mu},u-\bar{u}\rangle\ \geq0,\quad \forall u\in K.\label{Original Theorem c}
\end{eqnarray}
\end{subequations}
Moreover, $\bar{\lambda}$ can be taken equal to $1$ if the following condition of Slater type is satisfied:
\begin{equation}\label{Original slater condition}
\exists u_0\in K\quad such\ that\quad G(\bar{u})+DG(\bar{u})\cdot(u_0-\bar{u})\in C^{\circ}.
\end{equation}
\end{theorem}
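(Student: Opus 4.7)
The plan is to apply the geometric Hahn--Banach separation theorem in $\mathbb{R}\times Z$ to a convex set built from the linearizations of $J$ and $G$ at $\bar{u}$, and to read off $\bar{\lambda}$ and $\bar{\mu}$ from the separating continuous linear functional. Concretely, I introduce
\[
\mathcal{M}=\bigl\{(J'(\bar{u})(u-\bar{u})+\varepsilon,\,DG(\bar{u})(u-\bar{u})+G(\bar{u})-c):u\in K,\,\varepsilon>0,\,c\in C^{\circ}\bigr\}\subset\mathbb{R}\times Z,
\]
which is convex by convexity of $K$ and $C^{\circ}$ and which has non-empty interior because $C^{\circ}\neq\emptyset$ and $\varepsilon$ varies over an open half-line. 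Once I establish that $0\notin\mathcal{M}$, Hahn--Banach furnishes a non-zero pair $(\bar{\lambda},\bar{\mu})\in\mathbb{R}\times Z'$ with
\[
\bar{\lambda}\bigl(J'(\bar{u})(u-\bar{u})+\varepsilon\bigr)+\langle\bar{\mu},DG(\bar{u})(u-\bar{u})+G(\bar{u})-c\rangle\geq 0
\]
on all of $\mathcal{M}$, which already delivers the non-triviality statement $\bar{\lambda}+\|\bar{\mu}\|_{Z'}>0$.

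The crux is proving $0\notin\mathcal{M}$; this is exactly where the optimality of $\bar{u}$ enters. If $0\in\mathcal{M}$, pick witnesses $u\in K$, $\varepsilon>0$, $c\in C^{\circ}$ and set $u_t=\bar{u}+t(u-\bar{u})\in K$ for $t\in(0,1]$. The G\^{a}teaux expansions
\[
J(u_t)=J(\bar{u})-t\varepsilon+o(t),\qquad G(u_t)=(1-t)G(\bar{u})+tc+o(t),
\]
together with the fact that $(1-t)G(\bar{u})+tc\in C^{\circ}$ for $t\in(0,1]$ and the openness of $C^{\circ}$, which absorbs the $o(t)$ remainder in the second expansion, show that $u_t$ is feasible and strictly improves the cost, contradicting the optimality of $\bar{u}$. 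With the separating inequality in hand, I extract $\bar{\lambda}\geq 0$ by letting $u=\bar{u}$ and sending $c$ to $G(\bar{u})$ along a sequence $c_n\in C^{\circ}$, which exists because $C$ is convex with non-empty interior and therefore equals the closure of $C^{\circ}$; the specialization $u=\bar{u}$, $\varepsilon\to 0^+$, followed by the same density argument produces (b), and the specialization $c_n\to G(\bar{u})$, $\varepsilon\to 0^+$, with general $u\in K$ produces (c) in the stated form $\langle\bar{\lambda}J'(\bar{u})+[DG(\bar{u})]^{*}\bar{\mu},u-\bar{u}\rangle\geq 0$.

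For the Slater strengthening, I suppose $\bar{\lambda}=0$, so $\bar{\mu}\neq 0$ by non-triviality. Applying (c) with $u=u_0$ gives $\langle\bar{\mu},DG(\bar{u})(u_0-\bar{u})\rangle\geq 0$, while applying (b) with $c=G(\bar{u})+DG(\bar{u})(u_0-\bar{u})\in C^{\circ}$ gives the reverse inequality, hence equality. Since a full ball around this $c$ lies in $C$, inserting $c+\delta z$ with $\|z\|=1$ and $\delta>0$ small into (b) forces $\langle\bar{\mu},z\rangle\leq 0$ in every direction $z$, so $\bar{\mu}=0$, a contradiction. Therefore $\bar{\lambda}>0$ and a rescaling normalizes it to $\bar{\lambda}=1$. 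I expect the main obstacle to be the disjointness step $0\notin\mathcal{M}$: it requires me to lift a purely linearized descent-and-feasibility direction to an actual feasible perturbation of $\bar{u}$ using only G\^{a}teaux differentiability together with the openness of $C^{\circ}$, without invoking any stronger differentiability hypothesis. The passage from $c\in C^{\circ}$ to $c\in C$ in (b) and (c) is a subsidiary technical point, handled uniformly by density of $C^{\circ}$ in $C$.
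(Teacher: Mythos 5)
The paper does not prove this statement at all: it is imported verbatim as \cite[Theorem 5.2]{Casas1993BOUNDARY}, so there is no in-paper argument to compare yours against. Your separation proof is correct and is essentially the classical argument behind such abstract multiplier rules (and behind Casas's own proof): the set $\mathcal{M}$ is convex and open, optimality of $\bar{u}$ gives $0\notin\mathcal{M}$ via the quantitative fact that $B(c,r)\subset C$ and $x\in C$ imply $B((1-t)x+tc,\,tr)\subset C$, which is what lets the openness absorb the $o(t)$ remainder, and the limits $\varepsilon\to 0^{+}$, $c_n\to z$ with $c_n\in C^{\circ}$ recover (a)--(c); the only cosmetic slip is the claim that $C$ ``equals the closure of $C^{\circ}$'' --- one only has $C\subset\overline{C^{\circ}}$ when $C$ is not closed, which is all you actually use.
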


The next theorem establishes the optimality conditions for (\ref{original problem}).
\begin{theorem}\label{Theorem for optimal conditions}
Let $\bar{u}$ be a solution of problem {\rm(\ref{original problem})}, then there exist a real number $\bar{\lambda}\geq 0$ and elements $\bar{y}\in H^1_0(\mathrm{\Omega})$, $\bar{p}\in H^1_0(\mathrm{\Omega})$ and $\bar{\mu}\in {L^2(\mathrm{\Omega})}^n$ satisfying
\begin{equation}\label{optimal condition 1}
\bar{\lambda}+\|\bar{\mu}\|_{{L^2(\mathrm{\Omega})}^n}>0,
\end{equation}

\begin{equation}\label{optimal condition 2}
\langle \mathcal{A}\bar{y},z \rangle=\langle \bar{u},z \rangle+\langle f,z \rangle,\quad \forall z\in H_0^1(\mathrm{\Omega}),
\end{equation}

\begin{equation}\label{optimal condition 3}
\langle \bar{p},\mathcal{A}z \rangle=\bar{\lambda}\langle \bar{y}-y_d,z \rangle+\langle \bar{\mu},\nabla z \rangle,\quad \forall z\in H_0^1(\mathrm{\Omega}),
\end{equation}

\begin{equation}\label{optimal condition 4}
\langle \bar{\mu},z-\nabla\bar{y} \rangle \leq0\qquad \forall z\in\mathcal{K},
\end{equation}

\begin{equation}\label{optimal condition 5}
\int_{\Omega}(\bar{p}+\bar{\lambda}\alpha\bar{u})(u-\bar{u})\ dx\geq0\qquad \forall u\in\mathcal{W}.
\end{equation}
Moreover, if the following Slater condition is verified
\begin{equation*}
\exists u_0\in \mathcal{W}\quad such\ that \quad (\nabla\bar{y}+\nabla z_0)\in \mathcal{K}^{\circ},
\end{equation*}
where $z_0$ is the solution of the following Dirichlet problem
\begin{equation*}
\left\{\begin{aligned}
   \mathcal{A} z_0&=u_0-\bar{u} \qquad \mathrm{in}\  \mathrm{\Omega}, \\
   z_0&=0 \qquad\qquad\  \mathrm{on}\ \mathrm{\Gamma},
\end{aligned}\right.
\end{equation*}
then the system (\ref{optimal condition 1})-(\ref{optimal condition 5}) is satisfied with $\bar{\lambda}=1$.
\end{theorem}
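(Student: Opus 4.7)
The plan is to reduce problem (\ref{original problem}) to an optimization problem in the control variable alone and then apply Theorem \ref{Original Theorem}. By Theorems \ref{Existence Theorem} and \ref{Differentiability Theorem}, the control-to-state map $F: L^2(\mathrm{\Omega}) \to H^1_0(\mathrm{\Omega})$, $F(u) = y_u$, is of class $C^1$, so $G := \nabla \circ F$ is a $C^1$ map from $L^2(\mathrm{\Omega})$ into $L^2(\mathrm{\Omega})^n$. Introducing the reduced cost $\tilde J(u) := \frac{1}{2}\|F(u) - y_d\|^2 + \frac{\alpha}{2}\|u\|^2$, problem (\ref{original problem}) becomes $\min_{u \in \mathcal{W}} \tilde J(u)$ subject to $G(u) \in \mathcal{K}$. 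I would then apply Theorem \ref{Original Theorem} with $U = L^2(\mathrm{\Omega})$, $Z = L^2(\mathrm{\Omega})^n$, $K = \mathcal{W}$, and $C = \mathcal{K}$ (which has nonempty interior by hypothesis), obtaining $\bar\lambda \geq 0$ and $\bar\mu \in L^2(\mathrm{\Omega})^n$ for which the nontriviality condition (\ref{optimal condition 1}) holds. The second conclusion of Theorem \ref{Original Theorem} reads $\langle \bar\mu, z - G(\bar u)\rangle \leq 0$ for all $z \in \mathcal{K}$, which is precisely (\ref{optimal condition 4}); meanwhile (\ref{optimal condition 2}) is simply the state equation evaluated at the minimizer $(\bar y, \bar u)$.

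The main obstacle will be converting the abstract variational inequality (the third conclusion of Theorem \ref{Original Theorem}) into the concrete form (\ref{optimal condition 5}) by eliminating the appearances of $F'(\bar u)$ and $DG(\bar u)$. For this, I would introduce the adjoint state $\bar p \in H^1_0(\mathrm{\Omega})$ as the weak solution of (\ref{optimal condition 3}). A straightforward chain-rule computation yields $\tilde J'(\bar u) \cdot v = (\bar y - y_d, z_v) + \alpha(\bar u, v)$ and $\langle [DG(\bar u)]^{*}\bar\mu,\, v\rangle = (\bar\mu, \nabla z_v)$, where $z_v = F'(\bar u) v$ solves the linearized equation $\mathcal{A} z_v = v$ with homogeneous Dirichlet data. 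Testing the adjoint identity (\ref{optimal condition 3}) with $z = z_v$ produces
\begin{equation*}
\bar\lambda (\bar y - y_d, z_v) + (\bar\mu, \nabla z_v) \;=\; \langle \bar p,\, \mathcal{A} z_v\rangle \;=\; (\bar p, v),
\end{equation*}
so substituting into the abstract variational inequality with $v = u - \bar u$ causes the $z_v$-terms to collapse and leaves precisely $(\bar p + \bar\lambda\alpha\bar u,\, u - \bar u) \geq 0$, which is (\ref{optimal condition 5}).

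Finally, to justify that $\bar\lambda$ may be taken equal to $1$, I would verify that the Slater-type condition (\ref{Original slater condition}) of Theorem \ref{Original Theorem} reduces in our setting to the hypothesis stated in the theorem: for the admissible $u_0 \in \mathcal{W}$, one has $G(\bar u) + DG(\bar u)(u_0 - \bar u) = \nabla \bar y + \nabla z_0$, where $z_0$ is exactly the solution of the Dirichlet problem written in the statement, and the assumed interior condition is thus the direct translation of (\ref{Original slater condition}). The only subtlety to keep track of throughout is the consistent identification of $Z' = (L^2(\mathrm{\Omega})^n)'$ with $L^2(\mathrm{\Omega})^n$ via the Riesz isomorphism, so that $\bar\mu$ pairs with $\nabla z$ through the standard $L^2$ inner product; once that identification is fixed, no further technicalities arise.
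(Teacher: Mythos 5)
Your proposal is correct and follows essentially the same route as the paper's own proof: apply Theorem \ref{Original Theorem} to the reduced problem with $G=\nabla\circ F$, read off (\ref{optimal condition 1}) and (\ref{optimal condition 4}) directly, define $\bar p$ through the adjoint equation (\ref{optimal condition 3}), test it with $z_v=DF(\bar u)v$ to collapse the abstract variational inequality into (\ref{optimal condition 5}), and identify the Slater hypothesis with (\ref{Original slater condition}) via $z_0=DF(\bar u)(u_0-\bar u)$. No substantive differences.
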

\begin{proof}
Applying Theorem \ref{Original Theorem} with $U$ as the control space, $Z={L^{2}(\mathrm{\Omega})}^{n}$, $J$ the functional to minimize, $G=\nabla\cdot F$, which is differential (Theorem \ref{Differentiability Theorem}), $K$ the convex set $\mathcal{W}$ of $U$ and $C$ the convex set $\mathcal{K}$ of ${L^{2}(\mathrm{\Omega})}^{n}$.

Then from (\ref{Original Theorem a}) and (\ref{Original Theorem b}), we deduce the existence of $\bar{\lambda}$ and $\bar{\mu}$ satisfying (\ref{optimal condition 1}) and (\ref{optimal condition 4}). Now we take $\bar{y}=y_{\bar{u}}$ and $\bar{p}$ the unique solution of (\ref{optimal condition 3}). Then it remains to prove inequality (\ref{optimal condition 5}), which is done by using the corresponding inequality (\ref{Original Theorem c}). $\forall v\in U$, let us take $z=DF(\bar{u})\cdot v\in H^{1}_{0}(\mathrm{\Omega})$ as a solution of
\begin{equation*}
\left\{\begin{aligned}
   \mathcal{A} z&=v \qquad \mathrm{in}\  \mathrm{\Omega}, \\
   z&=0 \qquad \mathrm{on}\ \mathrm{\Gamma}.
\end{aligned}\right.
\end{equation*}
Then we derive that
\begin{equation}
\begin{aligned}
&\ \bar{\lambda}J'(\bar{u})\cdot v+\langle[DG(\bar{u})]^{*}\bar{\mu},v\rangle\\
=&\ \bar{\lambda}\int_{\Omega}(\bar{y}-y_d)z\ dx+\bar{\lambda}\alpha\int_{\Omega}\bar{u}v\ dx+\langle\bar{\mu},DG(\bar{u})\cdot v\rangle,\\
=&\ \bar{\lambda}\int_{\Omega}(\bar{y}-y_d)z\ dx+\bar{\lambda}\alpha\int_{\Omega}\bar{u}v\ dx+\langle\bar{\mu},\nabla z\rangle,\\
=&\  \int_{\Omega}\bar{p}\mathcal{A} z\ dx+\bar{\lambda}\alpha\int_{\Omega}\bar{u}v\ dx,\\
=&\  \int_{\Omega}(\bar{p}+\bar{\lambda}\alpha\bar{u})v\ dx.
\end{aligned}
\end{equation}
Then from (\ref{Original Theorem c}), we get
\begin{equation}
\int_{\Omega}(\bar{p}+\bar{\lambda}\alpha\bar{u})(u-\bar{u})\ dx=\langle\bar{\lambda}J'(\bar{u})+[DG(\bar{u})]^{*}\bar{\mu},u-\bar{u}\rangle\ \geq0,\quad \forall u\in\mathcal{W}.
\end{equation}
Moreover, if there exists $u_0\in \mathcal{W}$ such that $(\nabla\bar{y}+\nabla z_0)\in \mathcal{K}^{\circ}$, where $z_0$ is the solution of the following Dirichlet problem
\begin{equation*}
\left\{\begin{aligned}
   \mathcal{A} z_0&=u_0-\bar{u} \qquad \mathrm{in}\  \mathrm{\Omega}, \\
   z_0&=0 \qquad\qquad\  \mathrm{on}\ \mathrm{\Gamma}.
\end{aligned}\right.
\end{equation*}
We know from Theorem \ref{Differentiability Theorem} that $z_0=DF(\bar{u})\cdot (u_0-\bar{u})$, then
\begin{equation*}
\nabla\bar{y}+\nabla z_0=\nabla\cdot F(\bar{u})+\nabla\cdot DF(\bar{u})\cdot (u_0-\bar{u})=G(\bar(u))+DG(\bar{u})\cdot (u_0-\bar{u})\in\mathcal{K}^{\circ},
\end{equation*}
which means the condition of Slater type (\ref{Original slater condition}) holds, then $\bar{\lambda}$ can be taken to 1.
$\Box$
\end{proof}

\section{Finite Element Discretization}\label{Discretization}
In order to tackle (\ref{original problem}) numerically, we discretize both the state $y$ and the control $u$ by continuous piecewise linear functions. Let us introduce a family of regular triangulations $\{T_h\}_{h>0}$ of $\mathrm{\Omega}$, i.e. $\bar{\mathrm{\Omega}}=\bigcup_{T\in T_h}\bar{T}$. With each element $T\in T_h$, we associate two parameters $\rho(T)$ and $R(T)$, where $\rho(T)$ denotes the diameter of the set $T$ and $R(T)$ is the diameter of the largest ball contained in $T$. The mesh size of $T_h$ is defined by $h=\max_{T\in T_h}\rho(T)$. We suppose the following standard assumption holds (see \cite{hinze2010variational}, \cite{Hinze2009Optimization}).
\begin{assumption}\label{assumption finite element}{(Regular and quasi-uniform triangulations)}
The domain $\mathrm{\Omega}$ is a open bounded and convex subset of $R^n$, $n=2,3$ and its boundary $\Gamma$ is a polygon ($n=2$) or a polyhedron (n=3). Moreover, there exist two positive constants $\rho$ and $R$ such that
\begin{equation*}
\frac{\rho(T)}{R(T)}\leq R,\quad \frac{h}{\rho(T)}\leq \rho
\end{equation*}
hold for all $T\in T_h$ and all $h>0$.  Let us define ${\bar{\mathrm{\Omega}}}_h=\bigcup_{T\in T_h}{T}$, and let $\mathrm{\Omega}_h\in\mathrm{\Omega}$ and $\mathrm{\Gamma}_h$ denote its interior and its boundary, respectively. In the case that $\mathrm{\Omega}$ has a $C^{1,1}$-boundary $\mathrm{\Gamma}$, we assume that $\bar{\mathrm{\Omega}}_h$ is a convex and that all boundary vertices of $\bar{\mathrm{\Omega}}_h$ are contained in $\mathrm{\Gamma}$, such that
\begin{equation*}
|\mathrm{\Omega}\backslash {\mathrm{\Omega}}_h|\leq ch^2,
\end{equation*}
where $|\cdot|$ denotes the measure of the set and $c>0$ is a constant.
\end{assumption}

Let $Z_h=\mathrm{span}\{\phi_{1},\phi_{2},...,\phi_{N_h}\}$ be the finite dimensional subspace, then
\begin{equation}
y_h(x)=\sum_{i=1}^{N_h}y_i\phi_i(x),\quad u_h(x)=\sum_{i=1}^{N_h}u_i\phi_i(x),
\end{equation}
\begin{equation}
\nabla y_h(x)=\sum_{i=1}^{N_h}y_i\nabla\phi_i(x)=\left[
\begin{array}{c}
\sum_{i=1}^{N_h}y_i\frac{\partial\phi_i(x)}{\partial x_1}\\
\vdots\\
\sum_{i=1}^{N_h}y_i\frac{\partial\phi_i(x)}{\partial x_n}
\end{array}
\right],
\end{equation}
\begin{equation}
\|\nabla y_h(x)\|^2=\sum_{j=1}^{n}\|\sum_{i=1}^{N_h}y_i\frac{\partial\phi_i(x)}{\partial x_j}\|^2=\sum_{j=1}^{n}y^TD_jy=\|y\|_{D_h}^2,
\end{equation}
where $y=(y_1,y_2,...,y_{N_h})^T$, $u=(u_1,u_2,...,u_{N_h})^T$, $D_j=\left(\int_\mathrm{{\Omega_h}}\frac{\partial\phi_i(x)}{\partial x_j}\cdot\frac{\partial\phi_l(x)}{\partial x_j}dx\right)_{i,l=1}^{N_h},\ j=1,\cdots,n$, $D_h=\sum_{j=1}^{n}D_j$. We define the following matrices
\begin{equation}
   {K_h}=\left(a(\phi_i,\phi_j)\right)_{i,j=1}^{N_h}\quad{\rm{and}}\quad {M_h}=\left(\int_{\mathrm{\Omega}_h}\phi_i\cdot\phi_j\ dx\right)_{i,j=1}^{N_h},
\end{equation}
where ${K_h}$ and ${M_h}$ denote the finite element stiffness matrix and mass matrix respectively. Let
\begin{equation}
  y_{d,h}(x)=\sum\limits_{i=1}^{N_h}y_d^i\phi_i(x),\quad f_{h}(x)=\sum\limits_{i=1}^{N_h}f_{i}\phi_i(x)
\end{equation}
be the nodal projection of $y_d(x)$, $f(x)$ onto $Z_h$, where $y_d^i=y_d(x^i)$, $f_{i}=f(x^i)$. Then the discretized problem can be rewritten into the following matrix-vector form
\begin{equation}\label{eqn:discretized problem matrix_vector}
           \qquad \left\{ \begin{aligned}
        \min\limits_{y,u\in \mathbb{R}^{N_h}}^{} \ \ J(y,u)&=\frac{1}{2}\|y-y_d\|_{M_h}^{2}+\frac{\alpha}{2}\|u\|_{M_h}^{2} \\
        {\mathrm{s.t.}}\qquad\ \ K_h y&=M_hu+M_hf\\
         y\in \mathcal{C}&=\{z\in{\mathbb{R}}^{N_h}|\ \|z\|^{2}_{D_h}\leq\delta\},\\
         u\in \mathcal{S}&=\{z\in{\mathbb{R}}^{N_h}|\ a\leq z\leq b\}.
                          \end{aligned} \right.\tag{$\mathrm{P}_h$}
\end{equation}

\section{A FE-dABCD algorithm}\label{A FE-ABCD algorithm}
In this part, we could see that the discretized version of original problem can be reformulated into a multi-block unconstrained convex optimization problem via its dual. Then we first focus on the inexact majorized accelerate block coordinate descent (imABCD) method which was proposed by Cui in {\cite[Chapter 3]{cui2016}} for a general class of problems and then explain how we extend it to our problem with some strategies according to the structure of the problem.
\subsection{A duality-based approach}\label{A duality-based approach}
We introduce two artificial variables $z$ and $w$, then (\ref{eqn:discretized problem matrix_vector}) can be transformed into the following problem
\begin{equation}\label{eqn:discretized problem zw}
           \qquad \left\{ \begin{aligned}
        \min\limits_{y,u,z,w\in \mathbb{R}^{N_h}}^{} \ \ J(y,u)&=\frac{1}{2}\|y-y_d\|_{M_h}^{2}+\frac{\alpha}{2}\|u\|_{M_h}^{2}+\delta_{\mathcal{C}}(z)+\delta_{\mathcal{S}}(w) \\
        {\mathrm{s.t.}}\qquad\quad\ K_h y&=M_hu+M_hf,\\
         y&=z,\\
         u&=w,
                          \end{aligned} \right.\tag{$\mathrm{\widetilde{P}}_h$}
\end{equation}
whose Lagrangian function is
\begin{equation}
\begin{aligned}
L(y,u,z,w;p,\lambda,\mu)=&\ \frac{1}{2}\|y-y_d\|_{M_h}^{2}+\frac{\alpha}{2}\|u\|_{M_h}^{2}+\delta_{\mathcal{C}}(z)+\delta_{\mathcal{S}}(w)\\
&+(K_hy-M_hu-M_hf,p)+(y-z,\lambda)+(u-w,\mu),\\
=&\ \underbrace{\frac{1}{2}\|y-y_d\|_{M_h}^{2}+(K_hy,p)+(y,\lambda)}_{L_1}\\
&+\underbrace{\frac{\alpha}{2}\|u\|_{M_h}^{2}-(M_hu,p)+(u,\mu)}_{L_2}\\
&+\underbrace{(-z,\lambda)+\delta_{\mathcal{C}}(z)}_{L_3}+\underbrace{(-w,\mu)+\delta_{\mathcal{S}}(w)}_{L_4}-(M_hf,p),
\end{aligned}
\end{equation}
where $p$, $\lambda$ and $\mu$ are Lagrangian multipliers associated with the three equality constraints respectively. Then the dual problem of (\ref{eqn:discretized problem zw}) is
\begin{equation}\label{original dual problem}
\max\limits_{p,\lambda,\mu}\ \min\limits_{y,u,z,w}\ L(y,u,z,w;p,\lambda,\mu)
\end{equation}
Let us focus on $\min\limits_{y,u,z,w}\ L(y,u,z,w;p,\lambda,\mu)$ first, we have
\begin{equation}\label{KKT1}
\left\{
\begin{aligned}
\nabla_y L_1\ =&\ M_h(y-y_d)+K_h^Tp+\lambda=0,\\
\nabla_u L_2\ =&\ \alpha M_hu-{M_h}^Tp+\mu=0,\\
\min_{z}\ L_3=&\ \min_{z}\ \{(-z,\lambda)+\delta_{\mathcal{C}}(z)\},\\
=&-\max_{z}\ \{(z,\lambda)-\delta_{\mathcal{C}}(z)\}=-\delta^*_{\mathcal{C}}(\lambda),\\
\min_{w}\ L_4=&\ \min_{w}\ \{(-w,\mu)+\delta_{\mathcal{S}}(w)\},\\
=&-\max_{w}\ \{(w,\mu)-\delta_{\mathcal{S}}(w)\}=-\delta^*_{\mathcal{S}}(\mu).
\end{aligned}\right.
\end{equation}
Above we use the concept of conjugate function. The conjugate function of $f$ is defined by
\begin{equation}\label{conjugate function of f}
f^*(x)=\sup_{y}\ \{(x,y)-f(y)\}.
\end{equation}
Insert (\ref{KKT1}) to $L(y,u,z,w;p,\lambda,\mu)$ we can get
\begin{equation*}
\begin{aligned}
&\min\limits_{y,u,z,w}\ L(y,u,z,w;p,\lambda,\mu)\\
=&\min\limits_{y,u,z,w}\ \frac{1}{2}\|M_h(y-y_d)+{K_h}^Tp+\lambda\|_{M_h^{-1}}^2-\frac{1}{2}\|{K_h}^Tp+\lambda-M_hy_d\|_{M_h^{-1}}^2+\frac{1}{2}\|y_d\|_{M_h}^2\\
&\qquad\quad+\frac{\alpha}{2}\|M_hu-\frac{1}{\alpha}M_hp+\frac{1}{\alpha}\mu\|_{M_h^{-1}}^2-\frac{1}{2\alpha}\|M_hp-\mu\|_{M_h^{-1}}^2\\
&\qquad\quad+\delta_{\mathcal{C}}(z)-(z,\lambda)+\delta_{\mathcal{S}}(w)-(w,\mu)-(M_hf,p),\\
=&-\frac{1}{2}\|{K_h}^Tp+\lambda-M_hy_d\|_{M_h^{-1}}^2-\frac{1}{2\alpha}\|M_hp-\mu\|_{M_h^{-1}}^2\\
&-\delta^*_{\mathcal{C}}(\lambda)-\delta^*_{\mathcal{S}}(\mu)-(M_hf,p)+\frac{1}{2}\|y_d\|_{M_h}^2.
\end{aligned}
\end{equation*}
Then (\ref{original dual problem}) is transformed into
\begin{equation}
\begin{aligned}
\max\limits_{p,\lambda,\mu}\ &-\frac{1}{2}\|{K_h}^Tp+\lambda-M_hy_d\|_{{M_h^{-1}}}^2-\frac{1}{2\alpha}\|M_hp-\mu\|_{M_h^{-1}}^2\\
&-\delta^*_{\mathcal{C}}(\lambda)-\delta^*_{\mathcal{S}}(\mu)-(M_hf,p)+\frac{1}{2}\|y_d\|_{M_h}^2,
\end{aligned}
\end{equation}
which is equivalent to
\begin{equation}\label{dual problem}
\begin{aligned}
\min\limits_{p,\lambda,\mu}\ F(p,\lambda,\mu)=&\frac{1}{2}\|{K_h}^Tp+\lambda-M_hy_d\|_{{M_h^{-1}}}^2+\frac{1}{2\alpha}\|M_hp-\mu\|_{{M_h^{-1}}}^2\\
&+\delta^*_{\mathcal{C}}(\lambda)+\delta^*_{\mathcal{S}}(\mu)+(M_hf,p)-\frac{1}{2}\|y_d\|_{M_h}^2,
\end{aligned}
\tag{$\mathrm{D\widetilde{P}}_h$}
\end{equation}
which is a multi-block unconstrained optimization problem. Thus, accelerated block coordinate descent (ABCD) method is preferred and appropriate. We will expend the imABCD algorithm, which was proposed in {\cite[Chapter 3]{cui2016}}, to our problem and employ some strategies according to the structure of our problem to solve it efficiently. We will give the details about the algorithm in the following part. 

\subsection{Inexact majorized ABCD algorithm}
It is well known that taking the inexactness of the solutions of associated subproblems into account is important for the numerical implementation. Thus, let us give a brief sketch of the imABCD method for a general class of unconstrained, multi-block convex optimization problems with coupled objective function
\begin{equation}\label{imABCD general model}
\min_{v,w}\ \theta(v,w):=f(v)+g(w)+\phi(v,w),
\end{equation}
where $f:\mathcal{V}\rightarrow(-\infty,+\infty]$ and $g:\mathcal{W}\rightarrow(-\infty,+\infty]$ are two convex functions (possibly nonsmooth), $\phi:\mathcal{V}\times\mathcal{W}\rightarrow(-\infty,+\infty]$ is a smooth convex function, and $\mathcal{V}$, $\mathcal{W}$ are real finite dimensional Hilbert spaces. To tackle with the general model (\ref{imABCD general model}), some more conditions and assumptions on $\phi$ are required.
\begin{assumption}\label{phi assumption}
The convex function $\phi:\mathcal{V}\times\mathcal{W}\rightarrow(-\infty,+\infty]$ is continuously differentiable with Lipschitz contunous gradient.
\end{assumption}

Let us denote $z:=(v,w)\in\mathcal{V}\times\mathcal{W}$. Hiriart-Urruty and Nguyen provided a second order Mean-Value Theorem (see {\cite[Theorem 2.3]{Hiriart1984Generalized}}) for $\phi$, which states that for any $z'$ and $z$ in $\mathcal{V}\times\mathcal{W}$, there exists $z''\in[z',z]$ and a self-adjoint positive semidefinite operator $\mathcal{G}\in{\partial}^{2}(z'')$ such that
\begin{equation}
\phi(z)=\phi(z')+\langle\nabla\phi(z'),z-z'\rangle+\frac{1}{2}\|z'-z\|_{\mathcal{G}}^{2},
\end{equation}
where ${\partial}^{2}(z'')$ denotes the Clarke's generalized Hessian at given $z''$ and $[z',z]$ denotes the line segment connecting $z'$ and $z$. Under Assumption \ref{phi assumption}, it is obvious that there exist two self-adjoint positive semidefinite linear operators $\mathcal{Q}$ and $\hat{\mathcal{Q}}:\mathcal{V}\times\mathcal{W}\rightarrow\mathcal{V}\times\mathcal{W}$ such that for any $z\in\mathcal{V}\times\mathcal{W}$,
\begin{equation}
\mathcal{Q}\preceq\mathcal{G}\preceq\hat{\mathcal{Q}},\qquad\forall\mathcal{G}\in{\partial}^{2}(z).
\end{equation}
Thus, for any $z$, $z'\in\mathcal{V}\times\mathcal{W}$, it holds
\begin{equation}
\phi(z)\geq\phi(z')+\langle\nabla\phi(z'),z-z'\rangle+\frac{1}{2}\|z'-z\|_{\mathcal{Q}}^{2}
\end{equation}
and
\begin{equation}
\phi(z)\leq\hat{\phi}(z;z'):=\phi(z')+\langle\nabla\phi(z'),z-z'\rangle+\frac{1}{2}\|z'-z\|_{\hat{\mathcal{Q}}}^{2}.
\end{equation}
Furthermore, we decompose the operators $\mathcal{Q}$ and $\hat{\mathcal{Q}}$ into the following block structures
\begin{equation}
\mathcal{Q}z:=\left(\begin{array}{cc}
\mathcal{Q}_{11}& \mathcal{Q}_{12}\\
\mathcal{Q}_{21}^* & \mathcal{Q}_{22}\\
\end{array}\right)
\left(\begin{array}{c}
v\\
w\\
\end{array}\right),\quad
\hat{\mathcal{Q}}z:=\left(\begin{array}{cc}
\hat{\mathcal{Q}}_{11}& \hat{\mathcal{Q}}_{12}\\
\hat{\mathcal{Q}}_{21}^* & \hat{\mathcal{Q}}_{22}\\
\end{array}\right)
\left(\begin{array}{c}
v\\
w\\
\end{array}\right),\quad
\forall z=(v,w)\in\mathcal{V}\times\mathcal{W}
\end{equation}
and assume $\mathcal{Q}$ and $\hat{\mathcal{Q}}$ satisfy the following conditions.
\begin{assumption}{\rm\cite[Assumption 3.1]{cui2016}}\label{Q assumption}
There exist two self-adjoint positive semidefinite linear operators $\mathcal{D}_{1}:\mathcal{V}\rightarrow\mathcal{V}$ and $\mathcal{D}_{2}:\mathcal{W}\rightarrow\mathcal{W}$ such that
\begin{equation}
\hat{\mathcal{Q}}:=\mathcal{Q}+{\rm{Diag}}(\mathcal{D}_{1},\mathcal{D}_2).
\end{equation}
Furthermore, $\hat{\mathcal{Q}}$ satisfies that $\hat{\mathcal{Q}}_{11}\succ0$ and $\hat{\mathcal{Q}}_{22}\succ0$.
\end{assumption}

Now we present the inexact majorized ABCD algorithm for the general problem (\ref{imABCD general model}) as follow.
\begin{algorithm}[H]
\caption{An inexact majorized ABCD algorithm for (\ref{imABCD general model})}
\label{algorithm_imABCD}
\textbf{Input}: $(v^{1},w^{1})=({\tilde{v}}^{0},{\tilde{w}}^{0})\in {\rm{dom}}(f)\times{\rm{dom}}(g)$. Let $\{\epsilon_{k}\}$ be a summable sequence of nonnegative numbers, and set $t_1=1$, $k=1$.\\
\textbf{Output}: $({\tilde{v}}^{k},{\tilde{w}}^{k})$\\
Iterative until convergence:
\begin{description}
\item[\textbf{Step 1}] Choose error tolerance $\delta^{k}_{v}\in\mathcal{V}$, $\delta^{k}_{w}\in\mathcal{W}$ such that
\begin{equation*}
\max{\{\delta^{k}_{v},\delta^{k}_{w}\}}\leq\epsilon_{k}.
\end{equation*}
\qquad Compute
\begin{equation*}
\left\{\begin{aligned}
{\tilde{v}}^k&=\arg\min_{v\in\mathcal{V}}\ \{f(v)+\hat{\phi}(v,w^k;v^k,w^k)-\langle\delta^{k}_{v},v\rangle\},\\
{\tilde{w}}^k&=\arg\min_{w\in\mathcal{W}}\ \{g(w)+\hat{\phi}({\tilde{v}}^k,w;v^k,w^k)-\langle\delta^{k}_{w},w\rangle\}.
\end{aligned}\right.
\end{equation*}
\item[\textbf{Step 2}] Set $t_{k+1}=\frac{1+\sqrt{1+4t_k^2}}{2}$ and $\beta=\frac{t_k-1}{t_{k+1}}$, compute
\begin{equation*}
v^{k+1}={\tilde{v}}^k+\beta_{k}({\tilde{v}}^k-{\tilde{v}}^{k-1}),\quad w^{k+1}={\tilde{w}}^k+\beta_{k}({\tilde{w}}^k-{\tilde{w}}^{k-1}).
\end{equation*}
\end{description}
\end{algorithm}

As for the convergence result of the imABCD algorithm, we can refer to the following theorem.
\begin{theorem}{\rm\cite[Theorem 3.2]{cui2016}}\label{cui convergence theorem}
Suppose that Assumption \ref{Q assumption} holds and the solution set $\mathrm{\Omega}$ of the problem (\ref{imABCD general model}) is non-empty. Let $z^*=(v^*,w^*)\in\mathrm{\Omega}$. Assume that $\sum\limits^{\infty}_{k=1}k\epsilon_k<\infty$. Then the sequence $\{{\tilde{z}}^k\}:=\{({\tilde{v}}^k,{\tilde{w}}^k)\}$ generated by the Algorithm \ref{algorithm_imABCD} satisfies that
\begin{equation*}
\theta({\tilde{z}}^k)-\theta(z^*)\leq\frac{2\|{\tilde{z}}^0-z^*\|_{\mathscr{S}}^{2}+c_0}{(k+1)^2},\qquad\forall k\geq1,
\end{equation*}
where $c_0$ is a constant number and $\mathscr{S}:={\rm{Diag}}(\mathcal{D}_1,\mathcal{D}_2+\mathcal{Q}_{22})$.
\end{theorem}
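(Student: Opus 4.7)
The plan is to adapt the classical FISTA-style potential function argument to the coupled, inexact, majorized block setting. First I would record the approximate first–order conditions coming from Step 1 of Algorithm \ref{algorithm_imABCD}: the inclusion of the perturbations $\delta_v^k,\delta_w^k$ in the minimands means there exist subgradients $\xi^k\in\partial f(\tilde v^k)$ and $\eta^k\in\partial g(\tilde w^k)$ such that
\begin{equation*}
\xi^k+\nabla_v\hat\phi(\tilde v^k,w^k;v^k,w^k)=\delta_v^k,\qquad \eta^k+\nabla_w\hat\phi(\tilde v^k,\tilde w^k;v^k,w^k)=\delta_w^k.
\end{equation*}
Combining these inclusions with the definition of $\hat\phi$, the majorization $\phi(z)\le\hat\phi(z;z')$, and the block strong convexity guaranteed by Assumption \ref{Q assumption} ($\hat{\mathcal Q}_{11}\succ0$, $\hat{\mathcal Q}_{22}\succ0$), I would derive a one-step descent estimate of the form
\begin{equation*}
\theta(\tilde z^k)-\theta(z)\le \tfrac{1}{2}\|z-z^k\|_{\mathscr S}^2-\tfrac{1}{2}\|z-\tilde z^k\|_{\mathscr S}^2+\langle \delta^k,\tilde z^k-z\rangle
\end{equation*}
valid for every $z=(v,w)\in\mathrm{dom}(f)\times\mathrm{dom}(g)$, where $\delta^k:=(\delta_v^k,\delta_w^k)$ and $\mathscr S={\rm Diag}(\mathcal D_1,\mathcal D_2+\mathcal Q_{22})$. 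The coupling term $\mathcal Q_{22}$ appears because the $w$-subproblem is evaluated at the already updated $\tilde v^k$, so that convexity of $\phi$ across the two blocks has to be exploited rather than the naive majorization.

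Next I would plug in the two choices $z=z^*$ and $z=\tilde z^{k-1}$ and form the standard Nesterov convex combination with weights $1$ and $t_k-1$. Using the identity $t_{k+1}^2-t_{k+1}=t_k^2$ and the extrapolation formula $z^{k+1}=\tilde z^k+\beta_k(\tilde z^k-\tilde z^{k-1})$, a direct algebraic manipulation (essentially the Beck–Teboulle telescoping trick) converts the combined inequality into a quasi-monotone statement for the Lyapunov potential
\begin{equation*}
\Phi_k:=t_k^2\bigl(\theta(\tilde z^k)-\theta(z^*)\bigr)+\tfrac{1}{2}\|u^k-z^*\|_{\mathscr S}^2,\qquad u^k:=t_k\tilde z^k-(t_k-1)\tilde z^{k-1}.
\end{equation*}
The resulting recursion reads $\Phi_k\le \Phi_{k-1}+t_k\langle\delta^k,t_k\tilde z^k-(t_k-1)\tilde z^{k-1}-z^*\rangle$, up to harmless terms.

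Finally I would control the inexactness. Using Cauchy–Schwarz and the elementary fact $\|u^k-z^*\|_{\mathscr S}\le\sqrt{2\Phi_{k-1}+2\Phi_0}$ (boot-strapped from the recursion) together with $t_k\le k$, the perturbation is bounded by $C\,k\epsilon_k\sqrt{\Phi_k}$, and the hypothesis $\sum k\epsilon_k<\infty$ then yields, via a standard lemma on perturbed monotone sequences (of the type in Schmidt–Le Roux–Bach), a uniform bound $\Phi_k\le 2\|\tilde z^0-z^*\|_{\mathscr S}^2+c_0$ for some finite constant $c_0$ depending only on $\sum_{k\ge1}k\epsilon_k$. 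Because $t_k\ge(k+1)/2$, the conclusion $\theta(\tilde z^k)-\theta(z^*)\le(2\|\tilde z^0-z^*\|_{\mathscr S}^2+c_0)/(k+1)^2$ follows. The main obstacle I anticipate is the careful bookkeeping in the second step: the cross blocks of $\mathcal Q$ must be absorbed correctly so that the difference of squared $\mathscr S$-norms telescopes cleanly, and the inexactness terms must be kept outside this telescope so the summability hypothesis — rather than the unavailable Lipschitz constant of $\nabla\phi$ — is what closes the argument.
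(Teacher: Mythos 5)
This theorem is quoted verbatim from \cite[Theorem 3.2]{cui2016}; the paper gives no proof of its own, so there is nothing internal to compare against. Your sketch is, however, essentially the argument used in the cited source: an inexact one-step descent estimate in the $\mathscr{S}$-seminorm (with the $\mathcal{Q}_{22}$ contribution arising precisely because the $w$-block is updated after $\tilde v^k$), the Beck--Teboulle convex combination with $t_{k+1}^2-t_{k+1}=t_k^2$, and a perturbed-recursion lemma that converts $\sum_k k\epsilon_k<\infty$ into a uniform bound on the Lyapunov potential. The one place your outline glosses over a real technicality is the Cauchy--Schwarz step bounding $\langle\delta^k,\tilde z^k-z^*\rangle$ by $C\,k\epsilon_k\sqrt{\Phi_k}$: since $\mathscr{S}$ is only positive \emph{semi}definite (indeed $\mathcal{D}_1=0$ in this paper's application), the $\mathscr{S}$-seminorm does not control $\|\tilde z^k-z^*\|$, so the error term must either be measured in a compatible (dual) norm or handled via a separate boundedness argument for the iterates; this is where the constant $c_0$ actually gets assembled and where the bookkeeping is most delicate.
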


\subsection{An FE-dABCD algorithm for (\ref{dual problem})}\label{An efficient ABCD algorithm}
Although at first glance, (\ref{dual problem}) is a $3$ block unconstrained convex optimization problem, fortunately, $\lambda$ and $\mu$ can be seen as one block because of the important fact that $\lambda$ and $\mu$ are decoupled. Seeing $p$ as $v$, $(\lambda,\mu)$ as $w$ and regarding $(M_hf,p)$ as $f(v)$, $\delta^*_{\mathcal{C}}(\lambda)+\delta^*_{\mathcal{S}}(\mu)$ as $g(w)$, $\frac{1}{2}\|{K_h}^Tp+\lambda-M_hy_d\|_{{M_h^{-1}}}^2+\frac{1}{2\alpha}\|M_hp-\mu\|_{M_h^{-1}}^2-\frac{1}{2}\|y_d\|_{M_h}^2$ as $\phi(v,w)$ in (\ref{imABCD general model}) and Algorithm \ref{algorithm_imABCD}, we first focus on applying imABCD algorithm to (\ref{dual problem}). Since $\phi$ is quadratic, we can take
\begin{equation}
\mathcal{Q}=\left[\begin{array}{ccccc}
K_h{M_h}^{-1}K_h^{T}+\frac{1}{\alpha}M_h &\ \ & K_hM_h^{-1}&\ \ &-\frac{1}{\alpha}I\\
&&&&\\
M_h^{-1}K_h^{T} && M_h^{-1} && 0\\
&&&&\\
-\frac{1}{\alpha}I && 0 && \frac{1}{\alpha}M_h^{-1}\\
\end{array}\right],
\end{equation}
where
\begin{equation}
\mathcal{Q}_{11}=K_h{M_h}^{-1}K_h^{T}+\frac{1}{\alpha}M_h,\quad
\mathcal{Q}_{22}=\left[\begin{array}{ccc}
M_h^{-1} &\ \ &0\\
0 && \frac{1}{\alpha}M_h^{-1}\\
\end{array}\right].
\end{equation}
In addition, we assume that there exist two self-adjoint positive semidefinite operators $\mathcal{D}_1$ and $\mathcal{D}_2$ such that Assumption \ref{Q assumption} holds, which implies that we should majorize $\phi(p,\lambda,\mu)$ at $z'=(p',\lambda',\mu')$ as
\begin{equation}
\phi(z)\leq\hat{\phi}(z;z')=\phi(z)+\frac{1}{2}\|p-p'\|_{\mathcal{D}_1}^{2}+\frac{1}{2}\left\Arrowvert
\left(\begin{array}{c}
\lambda\\
\mu\\
\end{array}\right)-\left(\begin{array}{c}
\lambda'\\
\mu'\\
\end{array}\right)\right\Arrowvert_{\mathcal{D}_2}^{2}.
\end{equation}
Based on the content above, we give the framework of imABCD for (\ref{dual problem}).
\begin{algorithm}[H]\label{algo: imABCD for DPh}
\caption{An inexact majorized ABCD algorithm for (\ref{dual problem})}
\textbf{Input}: $(p^{1},\lambda^{1},\mu^{1})=({\tilde{p}}^{0},{\tilde{\lambda}}^{0},{\tilde{\mu}}^{0})\in {\mathbb{R}}^{N_h}\times{\rm{dom}}(\delta^*_{\mathcal{C}})\times{\rm{dom}}(\delta^*_{\mathcal{S}})$. Let $\{\epsilon_{k}\}$ be a summable sequence of nonnegative numbers, and set $t_1=1$, $k=1$.\\
\textbf{Output}: $({\tilde{p}}^{k},{\tilde{\lambda}}^{k},{\tilde{\mu}}^{k})$\\
Iterative until convergence:
\begin{description}
\item[\textbf{Step 1}] Choose error tolerance $\delta^{k}_{p}$, $\delta^{k}_{\lambda}$ and $\delta^{k}_{\mu}$ such that
\begin{equation*}
\max{\{\delta^{k}_{p},\delta^{k}_{\lambda},\delta^{k}_{\mu}\}}\leq\epsilon_{k}.
\end{equation*}
\qquad Compute
\begin{equation*}
\begin{aligned}
{\tilde{p}}^k&=\arg\min_{p}\ \left\{(M_hf,p)+{\phi}(p,\lambda^k,\mu^k)+\frac{1}{2}\|p-p^k\|_{\mathcal{D}_1}^{2}-\langle\delta^{k}_{p},p\rangle\right\},\\
({\tilde{\lambda}}^k,{\tilde{\mu}}^k)&=\arg\min_{(\lambda,\mu)}\ \left\{\delta^*_{\mathcal{C}}(\lambda)+\delta^*_{\mathcal{S}}(\mu)+{\phi}({\tilde{p}}^k,\lambda,\mu)\right.\\
&\qquad\qquad\quad+\left.\frac{1}{2}\left\Arrowvert
\left(\begin{array}{c}
\lambda\\
\mu\\
\end{array}\right)-\left(\begin{array}{c}
\lambda^k\\
\mu^k\\
\end{array}\right)\right\Arrowvert_{\mathcal{D}_2}^{2}-\langle\delta^{k}_{\lambda},\lambda\rangle-\langle\delta^{k}_{\mu},\mu\rangle\right\}
\end{aligned}
\end{equation*}
\item[\textbf{Step 2}] Set $t_{k+1}=\frac{1+\sqrt{1+4t_k^2}}{2}$ and $\beta=\frac{t_k-1}{t_{k+1}}$, compute
\begin{equation*}
p^{k+1}={\tilde{p}}^k+\beta_{k}({\tilde{p}}^k-{\tilde{p}}^{k-1}),\quad {\lambda}^{k+1}={\tilde{{\lambda}}}^k+\beta_{k}({\tilde{{\lambda}}}^k-{\tilde{{\lambda}}}^{k-1}),\quad {\mu}^{k+1}={\tilde{{\mu}}}^k+\beta_{k}({\tilde{{\mu}}}^k-{\tilde{{\mu}}}^{k-1}).
\end{equation*}
\end{description}
\end{algorithm}

As we know, appropriate operators $\mathcal{D}_1$ and $\mathcal{D}_2$ are important for both the theory analysis and numerical implementation. Thus what we concern most now is how to choose the operators $\mathcal{D}_1$ and $\mathcal{D}_2$. In the view of numerical efficiency, the general principle is that in the premise of Assumption \ref{Q assumption}, both $\mathcal{D}_1$ and $\mathcal{D}_2$ should be as small as possible to get larger step-lengths and make the corresponding subproblems easy to solve.

First, let us focus on the choice for operator $\mathcal{D}_1$. Without the proximal term $\frac{1}{2}\|p-p'\|_{\mathcal{D}_1}^{2}$ and the error term $\langle\delta^{k}_{p},p\rangle$, let $F_1(p,\lambda^k,\mu^k)=(M_hf,p)+{\phi}(p,\lambda^k,\mu^k)$, then
\begin{equation}\label{KKT2:sub1}
\begin{aligned}
&\nabla_pF_1({\tilde{p}}^k,{\lambda}^k,{\mu}^k)\\
=&K_h{M_h^{-1}}({K_h}^T{\tilde{p}}^k-M_hy_d+{\lambda}^k)+\frac{1}{\alpha}(M_h{\tilde{p}}^k-{\mu}^k)+M_hf=0.
\end{aligned}
\end{equation}
Combining (\ref{KKT1}) and (\ref{KKT2:sub1}) we can derive that
\begin{equation}
\left\{
\begin{aligned}
-K_h{\tilde{y}}^k+\frac{1}{\alpha}(M_h{\tilde{p}}^k-\mu^{k})+M_hf&=0,\\
M_h({\tilde{y}}^k-y_d)+K_h^{T}{\tilde{p}}^k+\lambda^{k}&=0.
\end{aligned}\right.
\end{equation}
\begin{equation}\label{al:step1 equation system}
\Leftrightarrow\left[\begin{array}{cc}
\frac{1}{\alpha}M_h & -K_h\\
{K_h}^T & M_h\\
\end{array}\right]
\left[\begin{array}{c}
{\tilde{p}}^k\\
{\tilde{y}}^k\\
\end{array}\right]=
\left[\begin{array}{c}
\frac{1}{\alpha}\mu^{k}-M_hf\\
M_hy_d-\lambda^{k}\\
\end{array}\right].
\end{equation}
As we see, $p$-subproblem can be transformed into a $2*2$ block saddle point linear system, which can be solved by GMRES with preconditioner efficiently. So we only need set $\mathcal{D}_1=0$.

Next, for the choice of operator $\mathcal{D}_2$, the following facts about proximal operator
\begin{equation}\label{proximal operator}
\mathrm{prox}_{f}(x)=\arg\min_{y}\ \left\{f(y)+\frac{1}{2}\|y-x\|^2\right\},
\end{equation}
will be used in the following. For proximal operator (\ref{proximal operator}), there hold
\begin{equation}
\mathrm{prox}_{f}(x)+\mathrm{prox}_{f^*}(x)=x
\end{equation}
and 
\begin{equation}
\mathrm{prox}_{\frac{f}{\sigma}}(x)+\frac{1}{\sigma}\mathrm{prox}_{\sigma f^*}(\sigma x)=x,
\end{equation}
where $f^*$ is the conjugate function of $f$ defined as (\ref{conjugate function of f}).

Without the proximal term ${{\frac{1}{2}}}\left\Arrowvert
\left(\begin{array}{c}
\lambda\\
\mu\\
\end{array}\right)-\left(\begin{array}{c}
\lambda^k\\
\mu^k\\
\end{array}\right)\right\Arrowvert_{\mathcal{D}_2}^{2}$ and the error terms $\langle\delta^{k}_{\lambda},\lambda\rangle$ and $\langle\delta^{k}_{\mu},\mu\rangle$, the subproblem with regard to $(\lambda,\mu)$ will be
\begin{equation}\label{lambda mu subproblem}
\begin{aligned}
({\tilde{\lambda}}^k,{\tilde{\mu}}^k)=\arg\min_{(\lambda,\mu)}\ &\left\{\delta^*_{\mathcal{C}}(\lambda)+\delta^*_{\mathcal{S}}(\mu)+\frac{1}{2\alpha}\|M_h{\tilde{p}}^k-\mu\|_{M_h^{-1}}^2\right.\\
&\ \left.+\frac{1}{2}\|{K_h}^T{\tilde{p}}^k+\lambda-M_hy_d\|_{{M_h^{-1}}}^2\right\}.
\end{aligned}
\end{equation}
It is not difficult to see from (\ref{lambda mu subproblem}) that $\lambda$ and $\mu$ are decoupled, which means we can compute ${\tilde{\lambda}}^k$ and ${\tilde{\mu}}^k$  through the following two optimization problems respectively
\begin{equation}\label{lambda subproblem}
\begin{aligned}
{\tilde{\lambda}}^k=&\arg\min_{\lambda}\ \left\{\delta^*_{\mathcal{C}}(\lambda)+\frac{1}{2}\|{K_h}^T{\tilde{p}}^k+\lambda-M_hy_d\|_{{M_h^{-1}}}^2\right\},\\
=&\arg\min_{\lambda}\ \left\{\delta^*_{\mathcal{C}}(\lambda)+\frac{1}{2}\|\lambda-g^k\|_{{M_h^{-1}}}^2\right\},\\
\end{aligned}
\end{equation}
and 
\begin{equation}\label{mu subproblem}
{\tilde{\mu}}^k=\arg\min_{\mu}\ \left\{\delta^*_{\mathcal{S}}(\mu)+\frac{1}{2\alpha}\|{M_h\tilde{p}}^k-\mu\|_{M_h^{-1}}^2\right\},
\end{equation}
where $g^k=M_hy_d-{K_h}^T{\tilde{p}}^k$.

To make (\ref{lambda subproblem}) have a closed form solution, a natural choice is to add a proximal term $\frac{1}{2}\|{\lambda}-{\lambda}^k\|_{\sigma I-M_h^{-1}}^2$, where $\sigma$ is chosen such that $\sigma I-M_h^{-1}$ is a positive semidefinite matrix. Then,
\begin{equation}\label{lambdatilde solution}
\begin{aligned}
{\tilde{\lambda}}^k=&\arg\min_{\lambda}\ \left\{\delta^*_{\mathcal{C}}(\lambda)+\frac{1}{2}\|\lambda-g^k\|_{{M_h^{-1}}}^2+\frac{1}{2}\|{\lambda}-{\lambda}^k\|_{\sigma I-M_h^{-1}}^2\right\},\\
=&\arg\min_{{\lambda}}\ \left\{\delta^*_{\mathcal{C}}(\lambda)+\frac{\sigma}{2}{\lambda}^T{\lambda}-{\lambda}^T({M_h^{-1}}{g^k}+(\sigma I-M_h^{-1}){{{\lambda}}^k})\right\},\\
=&\arg\min_{\lambda}\ \left\{\frac{1}{\sigma}\delta^*_{\mathcal{C}}(\lambda)+\frac{1}{2}\|\lambda-d^k\|^2\right\},\\
=&\ \mathrm{prox}_{\frac{\delta^*_{\mathcal{C}}}{\sigma}}(d^k),\\
=&\ d^k-\frac{1}{\sigma}\mathrm{prox}_{\sigma\delta_{\mathcal{C}}}(\sigma d^k).
\end{aligned}
\end{equation}
In the last formula in (\ref{lambdatilde solution}), $d^k=\frac{1}{\sigma}y_d+{\lambda}^k-\frac{1}{\sigma}M_h^{-1}(K_h^T{\tilde{p}}^k+{\lambda}^k)$, which can be get by solving the following linear system
\begin{equation}
M_hd^k=\frac{1}{\sigma}M_hy_d+M_h{\lambda}^k-\frac{1}{\sigma}(K_h^T{\tilde{p}}^k+{\lambda}^k)
\end{equation}
and $\mathrm{prox}_{\sigma\delta_{\mathcal{C}}}(\sigma d^k)$ can be computed as follows
\begin{equation}
\begin{aligned}
\mathrm{prox}_{\sigma\delta_{\mathcal{C}}}(\sigma d^k)&=\arg\min_{z}\ \left\{\sigma\delta_{\mathcal{C}}(z)+\frac{1}{2}\|z-\sigma d^k\|^2\right\},\\
&=\arg\min_{z\in\mathcal{C}}\ \frac{1}{2\sigma}\|z-\sigma d^k\|^2,\\
&=\mathrm{\Pi}_{\mathcal{C}}(\sigma d^k),
\end{aligned}
\end{equation}
where $\mathrm{\Pi}_{\mathcal{C}}$ denotes the projection operator to set $\mathcal{C}$. So
\begin{equation}\label{lambda}
\tilde{\lambda}^{k}=d^k-\frac{1}{\sigma}\mathrm{\Pi}_{\mathcal{C}}(\sigma d^k).
\end{equation}
For the computation of projection to set $\mathcal{C}$, please refer to Subsection \ref{projection}. 

Here we would like to point out that to make $\sigma I-M_h^{-1}$ positive semidefinite, $\sigma$ has to be not less than the biggest eigenvalue of $M_h^{-1}$. However in practice, the biggest eigenvalue of $M_h^{-1}$ will not be easy to compute when $h$ is very small, which means the choice of appropriate $\sigma$ is not easy. To avoid the computation of the biggest eigenvalue, we utilize the lump mass matrix $W_h$ defined by
\begin{equation}\label{Wh}
  W_h={\rm{diag}}\left(\int_{\mathrm{\Omega}_h}\ \phi_i(x)\ dx\right)^{N_h}_{i=1},
\end{equation}
which is a diagonal matrix. For the mass matrix $M_h$ and the lump mass matrix $W_h$, the following proposition hold.
\begin{proposition}{\rm\cite[Table 1]{Wathen1987Realistic}}\label{eqn:martix properties}
$\forall$ $z\in \mathbb{R}^{N_h}$, the following inequalities hold:
\begin{equation*}
  \|z\|^2_{M_h}\leq\|z\|^2_{W_h}\leq c_n\|z\|^2_{M_h},\quad where\quad c_n=\left\{
  \begin{aligned}
  &4\quad if\quad n=2,\\
  &5\quad if\quad n=3.
  \end{aligned}\right.
\end{equation*}
\end{proposition}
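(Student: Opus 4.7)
The plan is to reduce the spectral equivalence between $M_h$ and $W_h$ to an element-by-element analysis on the reference simplex. Both matrices assemble additively from local contributions, $M_h=\sum_{T\in T_h} M_T$ and $W_h=\sum_{T\in T_h} W_T$ (with local matrices extended by zero to the global index set), so if I can establish the two inequalities $W_T-M_T\succeq 0$ and $c_n M_T-W_T\succeq 0$ on each element $T$, summing yields the claimed global inequalities, which in turn are equivalent to $\|z\|_{M_h}^2\le\|z\|_{W_h}^2\le c_n\|z\|_{M_h}^2$ for every $z\in\mathbb{R}^{N_h}$.

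On a single element with $n+1$ vertex basis functions $\phi_1,\dots,\phi_{n+1}$, I would use the partition-of-unity identity $\sum_i\phi_i\equiv 1$ on $T$. This gives $(W_T)_{ii}=\int_T\phi_i\,dx=\sum_j\int_T\phi_i\phi_j\,dx$, i.e.\ the $i$-th diagonal of $W_T$ is exactly the $i$-th row sum of $M_T$. Plugging in the standard exact-integration formulas for $P_1$ elements, one obtains the closed forms $M_T=\frac{|T|}{12}(I_3+E_3)$ in 2D and $M_T=\frac{|T|}{20}(I_4+E_4)$ in 3D, where $E_k$ denotes the $k\times k$ all-ones matrix. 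The corresponding lumped matrices are then $W_T=\frac{|T|}{3}I_3$ and $W_T=\frac{|T|}{4}I_4$, respectively.

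The key step is a direct eigenvalue computation using the well-known spectrum of $E_k$ (one eigenvalue $k$ with eigenvector $(1,\dots,1)^\top$ and the eigenvalue $0$ with multiplicity $k-1$), which makes any matrix of the form $\alpha I_k+\beta E_k$ trivial to analyse: its eigenvalues are $\alpha+k\beta$ (once) and $\alpha$ (with multiplicity $k-1$). Applied to $W_T-M_T$, this gives $\frac{|T|}{12}(3I_3-E_3)$ with eigenvalues $\{0,3,3\}$ in 2D and $\frac{|T|}{20}(4I_4-E_4)$ with eigenvalues $\{0,4,4,4\}$ in 3D; both are positive semidefinite. Applied to $c_n M_T-W_T$, a short calculation yields the rank-one matrices $\frac{|T|}{3}E_3$ (for $c_2=4$) and $\frac{|T|}{4}E_4$ (for $c_3=5$), which are manifestly positive semidefinite.

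The main obstacle is essentially bookkeeping: one must keep the correct dimensional constants and row-sum values straight, and in particular verify that $c_n=4,5$ are precisely the sharp constants in 2D and 3D (they are, since the rank-one defect matrices $c_n M_T-W_T$ degenerate to zero along the partition-of-unity direction). No analytical difficulty is involved; after the element-wise analysis, summing the two chains $0\preceq W_T-M_T$ and $0\preceq c_n M_T-W_T$ over $T\in T_h$ delivers the stated inequalities in their matrix-norm form.
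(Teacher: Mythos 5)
Your element-by-element argument is correct: the local formulas $M_T=\frac{|T|}{12}(I_3+E_3)$, $W_T=\frac{|T|}{3}I_3$ in 2D and $M_T=\frac{|T|}{20}(I_4+E_4)$, $W_T=\frac{|T|}{4}I_4$ in 3D are right, the spectral computations for $W_T-M_T$ and $c_nM_T-W_T$ check out, and assembling the positive semidefinite local differences yields the global inequalities. The paper itself gives no proof and simply cites Wathen's Table~1, whose underlying derivation is exactly this element-wise eigenvalue analysis of the lumped-versus-consistent mass matrices, so your route coincides with the standard one.
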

We can derive from Proposition \ref{eqn:martix properties} that $\forall$ $z\in \mathbb{R}^{N_h}$,
\begin{equation*}
  \|z\|^2_{W_h^{-1}}\leq\|z\|^2_{M_h^{-1}}\leq c_n\|z\|^2_{W_h^{-1}},\quad where\quad c_n=\left\{
  \begin{aligned}
  &4\quad if\quad n=2,\\
  &5\quad if\quad n=3.
  \end{aligned}\right.
\end{equation*}
It is clear that $c_nW_h^{-1}-{{M_h^{-1}}}$ is positive semidefinite. Also, in fact, each principle diagonal element of $W_h$ is twice as the counterpart of $M_h$, so $W_h$ is a diagonal matrix with positive principle diagonal elements. Let $\omega_m$ be the inverse of the smallest principle diagonal element of $W_h$, then we can set $\sigma=c_n\omega_m$. 

To make (\ref{mu subproblem}) have a closed form solution, similar to the discussion about the $\lambda$-subproblem, setting $\frac{1}{2\alpha}\|\mu-\mu^k\|_{\sigma I-{{M_h^{-1}}}}^2$, where $\sigma=c_n\omega_m$, as the proximal term is a natural choice. However, we say $\frac{1}{2\alpha}\|\mu-\mu^k\|_{c_nW_h^{-1}-{{M_h^{-1}}}}^2$ is a better proximal term for this subproblem because of the fact that $c_nW_h^{-1}-{{M_h^{-1}}}\prec\sigma I-{{M_h^{-1}}}$ and $\mathcal{S}$ is a box set. Then,
\begin{equation}
\begin{aligned}
{\tilde{\mu}}^k=&\arg\min_{\mu}\ \left\{\delta^*_{\mathcal{S}}(\mu)+\frac{1}{2\alpha}\|\mu-M_h\tilde{p}^k\|_{{M_h^{-1}}}^2+\frac{1}{2\alpha}\|\mu-\mu^k\|_{c_n W_h^{-1}-{{M_h^{-1}}}}^2\right\},\\
=&\arg\min_{\mu}\ \left\{\alpha\delta^*_{\mathcal{S}}(\mu)+\frac{1}{2}\|\mu-M_h\tilde{p}^k\|_{{M_h^{-1}}}^2+\frac{1}{2}\|\mu-\mu^k\|_{c_n W_h^{-1}-{{M_h^{-1}}}}^2\right\},\\
=&\arg\min_{\mu}\ \left\{\alpha\delta^*_{\mathcal{S}}(\mu)+\frac{c_n}{2}{\mu}^TW_h^{-1}{\mu}-{\mu}^T(\tilde{p}^k+(c_n W_h^{-1}-M_h^{-1}){\mu}^k)\right\},\\
=&\frac{1}{c}W_h\left(q^k-\alpha\mathrm{\Pi}_{\mathcal{S}}(\frac{1}{\alpha}q^k)\right),
\end{aligned}
\end{equation}
where $q^k=\tilde{p}^k+(c W_h^{-1}-M_h^{-1}){\mu}^k=\tilde{p}^k+c_n W_h^{-1}{\mu}^k-s^k$ and $s^k$ is the solution of the following linear system
\begin{equation}
M_hs^k={\mu}^k.
\end{equation}

\begin{remark}
We would like to emphasize here that for the $\lambda$-subproblem, we do not use $\frac{1}{2}\|\lambda-\lambda^k\|_{c_nW_h^{-1}-{{M_h^{-1}}}}^2$ as the proximal term because $\mathcal{C}$ is not a box set. If we do so, it will make the $\lambda$-subproblem do not have a closed form solution.
\end{remark}

Based on the content above, we can see that
\begin{equation}
\mathcal{D}_{2}=\left[\begin{array}{ccc}
\sigma I-M_h^{-1} &\ \ &0\\
0 && \frac{1}{\alpha}c_nW_h^{-1}-\frac{1}{\alpha}M_h^{-1}\\
\end{array}\right].
\end{equation}
Then we give the detailed framework of our inexact duality based majorized ABCD for (\ref{dual problem}) as follows
\begin{algorithm}[H]
\caption{An FE-dABCD algorithm for (\ref{dual problem})}
\label{DimADMM}
\textbf{Input}: $(p^{1},\lambda^{1},\mu^{1})=({\tilde{p}}^{0},{\tilde{\lambda}}^{0},{\tilde{\mu}}^{0})\in {\mathbb{R}}^{N_h}\times{\rm{dom}}(\delta^*_{\mathcal{C}})\times{\rm{dom}}(\delta^*_{\mathcal{S}})$. Let $\{\epsilon_{k}\}$ be a sequence of nonnegative numbers such that $\sum\limits^{\infty}_{k=1}k\epsilon_k<\infty$. Set $t_1=1$, $k=1$.\\
\textbf{Output}: $({\tilde{p}}^{k},{\tilde{\lambda}}^{k},{\tilde{\mu}}^{k})$\\
Iterative until convergence:
\begin{description}
\item[\textbf{Step 1}] Choose error tolerance $\delta^{k}_{p}$, $\delta^{k}_{\lambda}$ and $\delta^{k}_{\mu}$ such that
\begin{equation*}
\max{\{\delta^{k}_{p},\delta^{k}_{\lambda},\delta^{k}_{\mu}\}}\leq\epsilon_{k}.
\end{equation*}
\qquad Compute
\begin{equation*}
\begin{aligned}
{\tilde{p}}^k&=\arg\min_{p}\ \left\{(M_hf,p)+{\phi}(p,\lambda^k,\mu^k)-\langle\delta^{k}_{p},p\rangle\right\},\\
{\tilde{\lambda}}^k&=\arg\min_{\lambda}\ \left\{\delta^*_{\mathcal{C}}(\lambda)+\frac{1}{2}\|\lambda-M_hy_d+{K_h}^T{\tilde{p}}^k\|_{{M_h^{-1}}}^2+\frac{1}{2}\|{\lambda}-{\lambda}^k\|_{\sigma I-M_h^{-1}}^2-\langle\delta^{k}_{\lambda},\lambda\rangle\right\},\\
{\tilde{\mu}}^k&=\arg\min_{\mu}\ \left\{\delta^*_{\mathcal{S}}(\mu)+\frac{1}{2\alpha}\|\mu-M_h\tilde{p}^k\|_{{M_h^{-1}}}^2+\frac{1}{2\alpha}\|\mu-\mu^k\|_{c_n W_h^{-1}-{{M_h^{-1}}}}^2-\langle\delta^{k}_{\mu},\mu\rangle\right\}.
\end{aligned}
\end{equation*}
\item[\textbf{Step 2}] Set $t_{k+1}=\frac{1+\sqrt{1+4t_k^2}}{2}$ and $\beta=\frac{t_k-1}{t_{k+1}}$, compute
\begin{equation*}
p^{k+1}={\tilde{p}}^k+\beta_{k}({\tilde{p}}^k-{\tilde{p}}^{k-1}),\quad{\lambda}^{k+1}={\tilde{{\lambda}}}^k+\beta_{k}({\tilde{{\lambda}}}^k-{\tilde{{\lambda}}}^{k-1}),\quad {\mu}^{k+1}={\tilde{{\mu}}}^k+\beta_{k}({\tilde{{\mu}}}^k-{\tilde{{\mu}}}^{k-1}).
\end{equation*}
\end{description}
\end{algorithm}

We can show our FE-dABCD algorithm also has $O(\frac{1}{k^2})$ iteration complexity based on Theorem \ref{cui convergence theorem}.
\begin{theorem}\label{FE-dABCD iteration complexity}
Assume that $\sum\limits^{\infty}_{k=1}k\epsilon_k<\infty$. Let $\{{\tilde{z}}^k\}:=\{({\tilde{p}}^k,{\tilde{\lambda}}^k,{\tilde{\mu}}^k)\}$ be the sequence generated by Algorithm \ref{DimADMM}. Then we have
\begin{equation*}
F({\tilde{z}}^k)-F(z^*)\leq\frac{2\|{\tilde{z}}^0-z^*\|_{\mathscr{S}}^{2}+c_0}{(k+1)^2},\qquad\forall k\geq1,
\end{equation*}
where $c_0$ is a constant number, $\mathscr{S}:={\rm{Diag}}(\mathcal{D}_1,\mathcal{D}_2+\mathcal{Q}_{22})$ and $F(\cdot)$ is the objective function of problem {\rm(\ref{dual problem})}.
\end{theorem}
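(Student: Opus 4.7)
The plan is to recognize Theorem \ref{FE-dABCD iteration complexity} as a direct specialization of Theorem \ref{cui convergence theorem} and then simply verify the hypotheses. The first step is to cast problem (\ref{dual problem}) in the template (\ref{imABCD general model}) by identifying $v := p$, $w := (\lambda,\mu)$, $f(v) := (M_h f, p)$, $g(w) := \delta^{*}_{\mathcal{C}}(\lambda) + \delta^{*}_{\mathcal{S}}(\mu)$, and
\begin{equation*}
\phi(p,\lambda,\mu) := \tfrac{1}{2}\|K_h^{T} p + \lambda - M_h y_d\|_{M_h^{-1}}^{2} + \tfrac{1}{2\alpha}\|M_h p - \mu\|_{M_h^{-1}}^{2} - \tfrac{1}{2}\|y_d\|_{M_h}^{2}.
\end{equation*}
Here $f$ and $g$ are convex (the latter as a sum of support functions of nonempty closed convex sets), and $\phi$ is a convex quadratic, hence continuously differentiable with Lipschitz gradient, so Assumption \ref{phi assumption} holds automatically.

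Second, I would verify Assumption \ref{Q assumption} for the concrete $\mathcal{Q}$ given in Section \ref{An efficient ABCD algorithm} (which is exactly the Hessian of $\phi$) together with the proximal choices $\mathcal{D}_1 = 0$ and $\mathcal{D}_2 = \mathrm{Diag}(\sigma I - M_h^{-1},\, \tfrac{1}{\alpha}(c_n W_h^{-1} - M_h^{-1}))$ used in Algorithm \ref{DimADMM}. Positive semidefiniteness of $\mathcal{D}_1$ is trivial; positive semidefiniteness of $\mathcal{D}_2$ follows from the choice $\sigma = c_n \omega_m$ and Proposition \ref{eqn:martix properties}, which yields $c_n W_h^{-1} \succeq M_h^{-1}$ and $\sigma I \succeq M_h^{-1}$. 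The crucial strict positivity conditions $\hat{\mathcal{Q}}_{11} \succ 0$ and $\hat{\mathcal{Q}}_{22} \succ 0$ are then immediate: $\hat{\mathcal{Q}}_{11} = K_h M_h^{-1} K_h^{T} + \tfrac{1}{\alpha}M_h \succ 0$ since $M_h$ is symmetric positive definite, and $\hat{\mathcal{Q}}_{22} = \mathcal{Q}_{22} + \mathcal{D}_2 = \mathrm{Diag}(\sigma I,\, \tfrac{c_n}{\alpha}W_h^{-1}) \succ 0$ since $W_h$ is a diagonal matrix with strictly positive entries and $\sigma > 0$.

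Third, I would observe that Algorithm \ref{DimADMM} is literally the instantiation of Algorithm \ref{algorithm_imABCD} under the above identification: the $p$-update minimizes $f(v) + \hat{\phi}(v,w^k;v^k,w^k)$ with $\mathcal{D}_1 = 0$ (so the majorization of $\phi$ in $p$ coincides with $\phi$ itself), while the joint $(\lambda,\mu)$-update minimizes $g(w) + \hat{\phi}(\tilde{v}^k,w;v^k,w^k)$ with the stated $\mathcal{D}_2$; the extrapolation step is identical. The non-emptiness of the solution set of (\ref{dual problem}) is inherited from strong duality with the (strictly convex in $u$) discretized primal (\ref{eqn:discretized problem matrix_vector}), whose solvability follows from Theorem \ref{Unique Theorem} (at the continuous level) and standard arguments at the discrete level.

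With Assumptions \ref{phi assumption} and \ref{Q assumption} verified and the summability hypothesis $\sum_{k=1}^{\infty} k\epsilon_k < \infty$ assumed, Theorem \ref{cui convergence theorem} applies verbatim and yields
\begin{equation*}
F(\tilde{z}^k) - F(z^*) \leq \frac{2\|\tilde{z}^{0} - z^*\|_{\mathscr{S}}^{2} + c_0}{(k+1)^2}, \qquad \forall\, k \geq 1,
\end{equation*}
with $\mathscr{S} = \mathrm{Diag}(\mathcal{D}_1,\, \mathcal{D}_2 + \mathcal{Q}_{22})$, which is the desired conclusion. I do not anticipate a genuine obstacle here; the only point requiring care is the bookkeeping check that $\hat{\mathcal{Q}}_{22} \succ 0$ (not merely $\succeq 0$), which is where the use of the lumped mass matrix $W_h$ combined with Proposition \ref{eqn:martix properties} is essential, since a choice like $\mathcal{D}_2 = 0$ would leave $\hat{\mathcal{Q}}_{22}$ only positive semidefinite and block the application of Theorem \ref{cui convergence theorem}.
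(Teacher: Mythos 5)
Your proposal is correct and follows essentially the same route as the paper: both reduce the theorem to verifying Assumption \ref{Q assumption} (in particular $\hat{\mathcal{Q}}_{11}\succ0$ and $\hat{\mathcal{Q}}_{22}=\mathrm{Diag}(\sigma I,\tfrac{c_n}{\alpha}W_h^{-1})\succ0$) and then invoking Theorem \ref{cui convergence theorem}. Your additional checks (positive semidefiniteness of $\mathcal{D}_2$ via Proposition \ref{eqn:martix properties} and non-emptiness of the dual solution set) are sensible bookkeeping that the paper leaves implicit.
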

\begin{proof}
Based on Theorem \ref{cui convergence theorem}, what we have to do is just to verify that Assumption \ref{Q assumption} holds. Recall that 
\begin{equation}
\mathcal{D}_{1}=0,\quad
\mathcal{D}_{2}=\left[\begin{array}{ccc}
\sigma I-M_h^{-1} &\ \ &0\\
0 && \frac{1}{\alpha}c_nW_h^{-1}-\frac{1}{\alpha}M_h^{-1}\\
\end{array}\right],
\end{equation}
and 
\begin{equation}
\mathcal{Q}=\left[\begin{array}{ccccc}
K_h{M_h}^{-1}K_h^{T}+\frac{1}{\alpha}M_h &\ \ & K_hM_h^{-1}&\ \ &-\frac{1}{\alpha}I\\
&&&&\\
M_h^{-1}K_h^{T} && M_h^{-1} && 0\\
&&&&\\
-\frac{1}{\alpha}I && 0 && \frac{1}{\alpha}M_h^{-1}\\
\end{array}\right].
\end{equation}
Then we have
\begin{equation}
\hat{\mathcal{Q}}=\mathcal{Q}+{\rm{Diag}}(\mathcal{D}_{1},\mathcal{D}_2)=
\left[\begin{array}{ccccc}
K_h{M_h}^{-1}K_h^{T}+\frac{1}{\alpha}M_h &\ \ & K_hM_h^{-1}&\ \ &-\frac{1}{\alpha}I\\
&&&&\\
M_h^{-1}K_h^{T} && \sigma I && 0\\
&&&&\\
-\frac{1}{\alpha}I && 0 && \frac{1}{\alpha}c_nW_h^{-1}\\
\end{array}\right].
\end{equation}
That is to say
\begin{equation}
\hat{\mathcal{Q}}_{11}=K_h{M_h}^{-1}K_h^{T}+\frac{1}{\alpha}M_h,\quad
\hat{\mathcal{Q}}_{22}=\left[\begin{array}{ccc}
\sigma I &\ \ &0\\
0 && \frac{1}{\alpha}c_nW_h^{-1}\\
\end{array}\right].
\end{equation}
Since stiffness matrix $K_h$ and mass matrix $M_h$ are both symmetric positive definite matrices. Moreover, from Proposition \ref{eqn:martix properties}, we know that $\hat{\mathcal{Q}}_{11}\succ0$ and $\hat{\mathcal{Q}}_{22}\succ0$ hold. Thus we can establish the convergence of Algorithm \ref{DimADMM}.
$\Box$
\end{proof}

Assume that the sequence $\{({\tilde{p}}^k,{\tilde{\lambda}}^k,{\tilde{\mu}}^k)\}$ generated by Algorithm \ref{DimADMM} converges to $(p^*,\lambda^*,\mu^*)$, then from (\ref{KKT1}) we can derive that
\begin{equation}
M_hu^*=\frac{1}{\alpha}(M_hp^*-\mu^*).
\end{equation}

\subsection{An efficient preconditioner for the $p$-subproblem}
As we said in Section \ref{An efficient ABCD algorithm}, the $p$-subproblem can be transformed into a equation system (\ref{al:step1 equation system}), which is a special case of the generalized saddle-point system and can be inexactly solved by Krylov-based methods with preconditioner. It is easy to see that (\ref{al:step1 equation system}) is equivalent to the following equation system
\begin{equation}\label{psubproblem}
\left[\begin{array}{cc}
M_h & -\alpha K_h\\
{K_h}^T & M_h\\
\end{array}\right]
\left[\begin{array}{c}
{\tilde{p}}^k\\
{\tilde{y}}^k\\
\end{array}\right]=
\left[\begin{array}{c}
\mu^{k}-\alpha M_hf\\
M_hy_d-\lambda^{k}\\
\end{array}\right].
\end{equation}
Taking $A=M_h$, $B_1=B_2=K_h$, $a=1$ and $b=\alpha$, then it is clear that the coefficient matrix of (\ref{psubproblem}) has the form of {\cite[(1)]{Axelsson2016Comparison}} 
\begin{equation}
\mathscr{A}=\left[\begin{array}{cc}
A & -bB_2\\
aB_1 & A\\
\end{array}\right].
\end{equation}
In this paper, we employ the preconditioner 
\begin{equation}\label{preconditionerB}
\mathscr{B}=\left[\begin{array}{ccc}
A && -bB_2\\
aB_1 && A+\sqrt{ab}(B_1+B_2)\\
\end{array}\right],
\end{equation}
which was introduced in \cite{Axelsson2016Comparison} to precondition the generalized minimal residual (GMRES) method to solve (\ref{psubproblem}). We first present below some properties of the preconditioning matrix $\mathscr{B}$ and preconditioned matrix $\mathscr{B}^{-1}\mathscr{A}$, for more details, please refer to \cite{Axelsson2016Comparison}.
\begin{proposition}{\rm\cite[Proposition 1]{Axelsson2016Comparison}}
Consider a matrix $\mathscr{B}$ of form {\rm{(}\ref{preconditionerB}\rm{)}}. Let $H_i=A+\sqrt{ab}B_i$, $i=1,2$ be nonsingular. Then
\begin{equation*}
{\mathscr{B}}^{-1}=\left[\begin{array}{cc}
H_1^{-1}+H_2^{-1}-H_2^{-1}AH_1^{-1} & \sqrt{\frac{b}{a}}\left(I-H_2^{-1}A\right)H_1^{-1}\\
-\sqrt{\frac{b}{a}}H_2^{-1}\left(I-AH_1^{-1}\right) & H_2^{-1}AH_1^{-1}\\
\end{array}\right].
\end{equation*}
\end{proposition}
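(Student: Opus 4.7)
\smallskip
\noindent\textbf{Proof proposal.} The statement is a purely algebraic block‑matrix identity; the plan is to verify it by computing $\mathscr{B}^{-1}$ through a Schur‑complement (block LDU) factorization and then simplifying each of the four resulting blocks by means of the defining relation $\sqrt{ab}\,B_i=H_i-A$ ($i=1,2$). Concretely, writing
\begin{equation*}
\mathscr{B}=\begin{pmatrix}A & -bB_2\\ aB_1 & A+\sqrt{ab}(B_1+B_2)\end{pmatrix},
\end{equation*}
I would first invert the $(1,1)$ block (which is the nonsingular $A$) and compute the Schur complement
\begin{equation*}
S = \bigl(A+\sqrt{ab}(B_1+B_2)\bigr) - (aB_1)\,A^{-1}(-bB_2) = A+\sqrt{ab}(B_1+B_2)+ab\,B_1 A^{-1}B_2.
\end{equation*}

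The crucial step, which is the \emph{only} non‑routine part of the argument, is recognizing that $S$ admits the clean factorization
\begin{equation*}
S = (A+\sqrt{ab}\,B_1)\,A^{-1}\,(A+\sqrt{ab}\,B_2) = H_1 A^{-1}H_2,
\end{equation*}
obtained by expanding the right‑hand side and matching terms. This identity is where the very particular coefficient structure of $\mathscr{B}$ (the product $\sqrt{ab}\cdot\sqrt{ab}=ab$ matching the cross‑term $ab\,B_1 A^{-1}B_2$) pays off, and it immediately yields $S^{-1}=H_2^{-1}A H_1^{-1}$, which is already the claimed $(2,2)$ block of $\mathscr{B}^{-1}$.

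Once $S^{-1}$ is identified, I would invoke the standard block inversion formula
\begin{equation*}
\mathscr{B}^{-1}=\begin{pmatrix} A^{-1}+A^{-1}(-bB_2)S^{-1}(aB_1)A^{-1} & -A^{-1}(-bB_2)S^{-1}\\ -S^{-1}(aB_1)A^{-1} & S^{-1}\end{pmatrix}
\end{equation*}
and reduce each block by substituting $bB_2=\sqrt{b/a}\,(H_2-A)$ and $aB_1=\sqrt{a/b}\,(H_1-A)$. The off‑diagonal blocks collapse by a single ``telescoping'' identity of the form $(H_i-A)A^{-1}=H_i A^{-1}-I$, which produces the factors $(I-H_2^{-1}A)H_1^{-1}$ and $H_2^{-1}(I-AH_1^{-1})$ claimed in the statement; for the $(1,1)$ block one expands
\begin{equation*}
A^{-1}-A^{-1}(H_2-A)H_2^{-1}AH_1^{-1}(H_1-A)A^{-1}
\end{equation*}
and regroups using $A^{-1}(H_i-A)=A^{-1}H_i-I$ to obtain $H_1^{-1}+H_2^{-1}-H_2^{-1}AH_1^{-1}$.

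The main obstacle is purely the recognition of the Schur‑complement factorization $S=H_1 A^{-1}H_2$; after that, every block of the claimed $\mathscr{B}^{-1}$ emerges from direct substitution and bookkeeping. As a sanity check I would also verify $\mathscr{B}\mathscr{B}^{-1}=I$ block‑wise, which gives an independent confirmation without invoking the general block‑inverse formula. Note that the argument does not use any structural property of $A,B_1,B_2$ beyond invertibility of $A$, $H_1$, and $H_2$, which matches the hypothesis of the proposition.
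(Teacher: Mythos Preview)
Your approach is correct and is the standard way to derive such a block inverse. Note, however, that the paper itself does not prove this proposition: it is quoted from \cite{Axelsson2016Comparison} and stated without proof, so there is no in-paper argument to compare against.

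One point deserves attention. Your Schur-complement derivation pivots on the $(1,1)$ block and uses $A^{-1}$ throughout, so it tacitly assumes that $A$ is invertible. The proposition as stated assumes only that $H_1$ and $H_2$ are nonsingular, and the claimed formula for $\mathscr{B}^{-1}$ contains no $A^{-1}$. Thus your closing sentence, that the argument ``matches the hypothesis of the proposition,'' is not quite right: you have added a hypothesis. The clean proof under exactly the stated assumptions is the direct block-wise verification of $\mathscr{B}\,\mathscr{B}^{-1}=I$ (using $\sqrt{ab}\,B_i=H_i-A$), which you list only as a ``sanity check.'' In fact that verification \emph{is} the proof; the Schur-complement computation is the natural way to \emph{discover} the formula when $A$ happens to be invertible (as it is in the paper's application, where $A=M_h$). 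If you want the derivation itself to stand as a proof, either add a one-line density/continuity argument in $A$, or promote the direct verification to the main argument.
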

\begin{proposition}{\rm\cite[Proposition 2]{Axelsson2016Comparison}}\label{residualcompute}
Assume that $A+\sqrt{ab}B_i$, $i=1,2$ are nonsingular. Then $\mathscr{B}$ is nonsingular and a linear system with the preconditioner $\mathscr{B}$,
\begin{equation*}
\left[\begin{array}{cc}
A & -bB_2\\
aB_1 & A+\sqrt{ab}(B_1+B_2)\\
\end{array}\right]
\left[\begin{array}{c}
x\\
y\\
\end{array}\right]=
\left[\begin{array}{c}
f_1\\
f_2\\
\end{array}\right]
\end{equation*}
can be solved with only one solution with $A+\sqrt{ab}B_1$ and one with $A+\sqrt{ab}B_2$.
\end{proposition}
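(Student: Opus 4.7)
\medskip
\noindent\textbf{Proof proposal.} The plan is to exhibit an explicit factorization of $\mathscr{B}$ of the form
\begin{equation*}
\mathscr{B}=\begin{pmatrix} I & -\sqrt{b/a}\,I\\ 0 & I\end{pmatrix}\begin{pmatrix} H_1 & 0\\ aB_1 & H_2\end{pmatrix}\begin{pmatrix} I & \sqrt{b/a}\,I\\ 0 & I\end{pmatrix},
\end{equation*}
where $H_i:=A+\sqrt{ab}\,B_i$, $i=1,2$. The two outer unit triangular factors have determinant $1$, and the middle factor is block lower triangular. Hence once this identity is verified, nonsingularity of $\mathscr{B}$ follows immediately from $\det(\mathscr{B})=\det(H_1)\det(H_2)\neq 0$ by the hypothesis that $H_1$ and $H_2$ are nonsingular.

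My first step is to verify the factorization by direct multiplication, using only the algebraic identity $H_1-\sqrt{ab}B_1=A$ in the $(1,1)$ block and $\sqrt{b/a}(H_1-H_2)=bB_1-bB_2$ in the $(1,2)$ block; the remaining blocks match $A$ and $A+\sqrt{ab}(B_1+B_2)$ by inspection. This calculation is elementary and is the one place where all the structure of $\mathscr{B}$ is actually used, so I would present it explicitly in the paper.

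Next, to solve $\mathscr{B}[x;y]^{\mathsf T}=[f_1;f_2]^{\mathsf T}$, I introduce the change of variables $z:=x+\sqrt{b/a}\,y$, which corresponds exactly to applying the right-hand unit triangular factor above. Substituting, the system reduces to the block triangular system
\begin{equation*}
\begin{pmatrix} H_1 & 0\\ aB_1 & H_2\end{pmatrix}\begin{pmatrix} z\\ y\end{pmatrix}=\begin{pmatrix} f_1+\sqrt{b/a}\,f_2\\ f_2\end{pmatrix},
\end{equation*}
where the right-hand side is obtained by inverting the left-hand unit triangular factor (a trivial update on $f_1$). Forward block substitution now requires solving $H_1 z=f_1+\sqrt{b/a}\,f_2$, forming $f_2-aB_1z$, and then solving $H_2 y=f_2-aB_1z$; recovering $x=z-\sqrt{b/a}\,y$ is a vector update. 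In total, exactly one linear solve with $A+\sqrt{ab}B_1$ and one with $A+\sqrt{ab}B_2$ are performed, which is the claim.

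The only subtle point is guessing the form of the factorization, since the cross terms must cancel precisely; once the shift $\sqrt{b/a}$ is identified as the ``magic'' constant forcing $H_1-\sqrt{ab}B_1=A$ and $\sqrt{b/a}H_2=\sqrt{b/a}A+bB_2$, everything else is routine verification. The argument does not use any symmetry or positivity of $A$, $B_1$, $B_2$, so it applies at the stated level of generality.
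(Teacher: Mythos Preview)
Your factorization is correct and the resulting solve procedure is exactly the content of the proposition; the paper does not reprove it but cites \cite{Axelsson2016Comparison} and records the resulting solver as Algorithm~\ref{factorized operator}. There is one cosmetic difference worth flagging: your second solve uses the right-hand side $f_2-aB_1z$, whereas the paper's Algorithm~\ref{factorized operator} uses $f_1-Az$ (in their notation $r_1-M_hg$). These are proportional, since from $H_1z=f_1+\sqrt{b/a}\,f_2$ one gets $f_1-Az=-\sqrt{b/a}\,(f_2-aB_1z)$, and the resulting intermediate variable $h$ differs from your $y$ only by the scalar $-\sqrt{b/a}$; the recovered $(x,y)$ are identical. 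The paper's variant has the minor practical advantage that the second right-hand side involves a multiplication by $A$ (the mass matrix) rather than by $B_1$ (the stiffness matrix), but mathematically the two are the same factorization.
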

\begin{proposition}{\rm\cite[Proposition 4]{Axelsson2016Comparison}}
Let $\mathscr{A}=\left[\begin{array}{cc}
A & -bB^T\\
aB & A\\
\end{array}\right]$, where $a$, $b$ are nonzero and have the same sign and let $\mathscr{B}=\left[\begin{array}{cc}
A & -bB^T\\
aB & A+\sqrt{ab}(B+B^T)\\
\end{array}\right]$. If $ker(A)\cap ker(B)=\{0\}$ holds then the eigenvalues of $\mathscr{B}^{-1}\mathscr{A}$, are contained in the interval $[\frac{1}{2},1]$.
\end{proposition}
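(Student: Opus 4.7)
The plan is to analyze the generalized eigenvalue problem $\mathscr{A}v = \lambda \mathscr{B}v$ directly, writing $v = (x,y)^{T}$ blockwise. Since $\mathscr{A}$ and $\mathscr{B}$ agree in the first block row, the first block equation reduces to $(1-\lambda)(Ax - bB^{T}y) = 0$, which splits the analysis into two cases. The case $\lambda = 1$ already lies in $[\tfrac{1}{2},1]$, so only the case $Ax = bB^{T}y$ needs work.

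In this remaining case, assuming $A \succ 0$ (which holds in our application where $A = M_{h}$), one can substitute $x = bA^{-1}B^{T}y$ into the second block equation to eliminate $x$. Setting
\begin{equation*}
S := A + ab\,BA^{-1}B^{T}, \qquad T := \sqrt{ab}\,(B+B^{T}),
\end{equation*}
the equation becomes $(1-\lambda)Sy = \lambda T y$, and pairing with $y$ yields the Rayleigh-quotient expression
\begin{equation*}
\lambda \;=\; \frac{y^{T}Sy}{y^{T}(S+T)y}.
\end{equation*}
The central algebraic identity is the completion of squares
\begin{equation*}
S \pm T \;=\; \bigl(A^{1/2} \pm \sqrt{ab}\,A^{-1/2}B^{T}\bigr)^{T}\bigl(A^{1/2} \pm \sqrt{ab}\,A^{-1/2}B^{T}\bigr),
\end{equation*}
which is legitimate precisely because $a$ and $b$ share a sign, so $\sqrt{ab}$ is real. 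This simultaneously gives $S+T \succeq 0$ and $S-T \succeq 0$; the former bounds $\lambda \leq 1$, and the latter, rewritten as $2y^{T}Sy \geq y^{T}(S+T)y$, bounds $\lambda \geq \tfrac{1}{2}$. Combined with the trivial case, every eigenvalue lies in $[\tfrac{1}{2},1]$.

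The main obstacle is ruling out degenerate eigenvectors where the Rayleigh quotient is ill-defined, i.e.\ $y^{T}(S+T)y = 0$. By the factorization above, this forces $A^{1/2}y = 0$ and $\sqrt{ab}\,A^{-1/2}B^{T}y = 0$, which together with the hypothesis $\ker(A)\cap \ker(B) = \{0\}$ (used for $B^{T}$ after an appropriate compatibility argument) implies $y = 0$ and hence, via $Ax = bB^{T}y$ and the same kernel hypothesis, also $x = 0$, contradicting that $v$ is an eigenvector. A secondary technicality is justifying the reduction when $A$ is only positive semidefinite rather than strictly positive definite: in that case one must pass to the quotient by $\ker A$, or equivalently use the Moore--Penrose pseudoinverse, and verify that the Rayleigh-quotient argument survives on the reduced space. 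Apart from these kernel bookkeeping issues, the proof is essentially the completion-of-squares identity above.
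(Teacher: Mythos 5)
The paper does not actually prove this proposition --- it is quoted verbatim from Axelsson, Farouq and Neytcheva --- so there is no in-paper argument to compare against; I therefore assess your proof on its own. Your reduction to $(1-\lambda)Sy=\lambda Ty$ with $S=A+ab\,BA^{-1}B^{T}$, $T=\sqrt{ab}\,(B+B^{T})$, and the completion-of-squares identity $S\pm T=\bigl(A^{1/2}\pm\sqrt{ab}\,A^{-1/2}B^{T}\bigr)^{T}\bigl(A^{1/2}\pm\sqrt{ab}\,A^{-1/2}B^{T}\bigr)$ are correct, and $S-T\succeq0$ does yield $2y^{*}Sy\geq y^{*}(S+T)y$, hence $\lambda\geq\tfrac12$. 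The genuine gap is the upper bound: $S+T\succeq0$ does \emph{not} imply $\lambda\leq1$. Writing $s=y^{*}Sy\geq0$ and $t=y^{*}Ty$, your two positive-semidefiniteness facts only give $|t|\leq s$, hence $\lambda=s/(s+t)\in[\tfrac12,\infty)$; the bound $\lambda\leq1$ is equivalent to $t\geq0$, i.e.\ to $\sqrt{ab}\,(B+B^{T})\succeq0$, which is an additional hypothesis and not a consequence of the factorization. The failure is real, not merely a gap in the argument: take $A=1$, $B=-\tfrac12$, $a=b=1$; then $A+\sqrt{ab}\,B=\tfrac12\neq0$, $\ker(A)\cap\ker(B)=\{0\}$, yet $\mathscr{B}^{-1}\mathscr{A}=\left[\begin{smallmatrix}1&-2\\0&5\end{smallmatrix}\right]$ has the eigenvalue $5$. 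So the proposition as quoted here has silently dropped the positivity assumptions of the source (positive semidefiniteness of $A$ and of the symmetric part of $\sqrt{ab}\,B$); in the present application $a=1$, $b=\alpha>0$ and $B=K_h$ is symmetric positive definite, so $T\succ0$ and the upper bound is then immediate from $y^{*}Sy\leq y^{*}(S+T)y$. Your proof needs to state and use that hypothesis.

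A secondary error is in the degenerate case $y^{*}(S+T)y=0$: you conclude $A^{1/2}y=0$ \emph{and} $\sqrt{ab}\,A^{-1/2}B^{T}y=0$, but $\|u+v\|^{2}=0$ only forces $u+v=0$. What the factorization actually gives is $\bigl(A^{1/2}+\sqrt{ab}\,A^{-1/2}B^{T}\bigr)y=0$, equivalently $(A+\sqrt{ab}\,B^{T})y=0$, and $y=0$ then follows from the nonsingularity of $A+\sqrt{ab}\,B^{T}$ (the standing hypothesis of Proposition 2, which is needed anyway for $\mathscr{B}$ to be invertible), not from $\ker(A)\cap\ker(B)=\{0\}$. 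Finally, since eigenvectors of $\mathscr{B}^{-1}\mathscr{A}$ may a priori be complex, the pairing should be with $y^{*}$ rather than $y^{T}$; the quadratic forms are then real because $S$ and $T$ are real symmetric, which is what legitimizes the Rayleigh-quotient argument and shows in passing that all eigenvalues are real.
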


Numerical implementation of the preconditioning matrix $\mathscr{B}$ in Krylov subspace methods is realized by solving a sequence of generalized residual equations of the form
\begin{equation*}
\mathscr{B}v=r,
\end{equation*}
where $r=(r_1;r_2)\in{\mathbb{R}}^{2N_h}$, with $r_1$, $r_2\in{\mathbb{R}}^{N_h}$, represents the current residual vector and $v=(v_1;v_2)\in{\mathbb{R}}^{2N_h}$, with $v_1$, $v_2\in{\mathbb{R}}^{N_h}$, represents the generalized residual vector. Based on the proof of Proposition \ref{residualcompute} (see \cite{Axelsson2016Comparison} for more details), the computation of vector $v$ can take place using the following algorithm
\begin{algorithm}[H]
\caption{Solving the factorized operator}
\label{factorized operator}
1: Compute $g$ by solving the following linear system
\begin{equation*}
(M_h+\sqrt{\alpha}K_h)g=r_1+\sqrt{\alpha}r_2.
\end{equation*}

2: Compute $h$ by solving the following linear system
\begin{equation*}
(M_h+\sqrt{\alpha}K_h)h=r_1-M_hg.
\end{equation*}

3: Compute $v_1=g+h$ and $v_2=-\sqrt{\frac{1}{\alpha}}h$.
\end{algorithm}

We would like to point out that $Q:=M_h+\sqrt{\alpha}K_h$ is a symmetric positive definite matrix. If the Cholesky factorizations of $Q$, which only have to be done once, can be computed at a modest cost, then the two linear systems above can be solved exactly and effectively. However, if the Cholesky factorizations of $Q$ are not available, then we can use some alternative efficient methods, e.g., preconditioned conjugate gradient (PCG) method, Chebyshev semi-iteration or some multigrid schemes to solve them.

\subsection{Computation of the projection to set $\mathcal{C}$}\label{projection}
In this part, we focus on using Newton's method computing the projection to set $\mathcal{C}$. For a given $g$, computing $\mathrm{\Pi}_{\mathcal{C}}(g)$ is equivalent to solving the following optimization problem with inequality constraint
\begin{equation}\label{projection problem}
\begin{aligned}
\min\limits_{x}\quad &\frac{1}{2}\|x-g\|^2\\
{\rm{s.t.}}\quad&x^TD_hx\leq\delta.        
\end{aligned}
\end{equation}
For a given $g$, we first compute the value of $g^TD_hg$. If $g^TD_hg\leq\delta$, then the solution is $\bar{x}=g$. Otherwise, (\ref{projection problem}) can be transformed into a optimization problem with equality constraint 
\begin{equation}\label{projection problem 2}
\begin{aligned}
\min\limits_{x}\quad &\frac{1}{2}\|x-g\|^2\\
{\rm{s.t.}}\quad&x^TD_hx=\delta,      
\end{aligned}
\end{equation}
whose Lagrange function is
\begin{equation}
\bar{L}(x,\rho)=\frac{1}{2}\|x-g\|^2+(\rho,x^TD_hx-\delta),
\end{equation}
where $\rho\in\mathbb{R}$ is the Lagrange multiplier corresponding to the equality constraint. Then the optimality conditions of (\ref{projection problem 2}) are 
\begin{equation}
\left\{
\begin{aligned}
&\nabla_x \bar{L}\ =\ x-g+2\rho D_hx=0,\\
&\nabla_{\rho} \bar{L}\ =\ x^TD_hx-\delta=0,
\end{aligned}\right.
\end{equation}
which are actually nonlinear equations
\begin{equation}
H(x,\rho)=\left[\begin{array}{c}
x-g+2\rho D_hx\\
x^TD_hx-\delta\\
\end{array}\right]=0,
\end{equation}
whose Jacobian is
\begin{equation}
J(x,\rho)=\left[\begin{array}{ccc}
I+2\rho D_h &\ & 2D_hx\\
2x^TD_h && 0\\
\end{array}\right], 
\end{equation}
and residual is
\begin{equation}
r(x,\rho)=\left[\begin{array}{c}
x-g+2\rho D_hx\\
x^TD_hx-\delta\\
\end{array}\right].
\end{equation}
Then we give the framework of Newton's method for (\ref{projection problem 2}).
\begin{algorithm}[H]
\caption{Newton's method for (\ref{projection problem})}
\label{Newton for projection}
Choose $(x_0;\lambda_0)$;\\
\textbf{for} $k=0,1,2,\cdots$

\qquad Calculate a solution $p_k$ to the Newton equations
\begin{equation*}
\left[\begin{array}{ccc}
I+2\rho_k D_h &\ & 2D_hx_k\\
2x_k^TD_h && 0\\
\end{array}\right]p_k
=-\left[\begin{array}{c}
x_k-g+2\rho_k D_hx_k\\
x_k^TD_hx_k-\delta\\
\end{array}\right];
\end{equation*}
\qquad$(x_{k+1};\lambda_{k+1})=(x_{k};\lambda_{k})+p_k$;\\
\textbf{end(for)}
\end{algorithm}

\section{Numerical Experiment}\label{Numerical Experiment}
In this section, all calculations were performed using MATLAB (R2014a) on a PC with Intel (R) Xeon (R) CPU E5-2609 (2.50 GHz), whose operation system is 64-bit Windows 8.0 and RAM is 64.0 GB. The mass matrix, the stiffness matrix and the lump mass matrix are established by the iFEM software package \cite{Chen}.

For the FE-dABCD algorithm, the accuracy of a numerical solution is measured by the following residual
\begin{equation}
\eta_d=\max\{r_1,\ r_2,\ r_3,\ r_4\},
\end{equation}
where
\begin{equation*}
\begin{aligned}
&r_1=\|M_h(y-y_d)+K_h^Tp+\lambda\|/(1+\|M_hy_d\|),\\
&r_2=\|y-{\rm\Pi}_{\mathcal{C}}(y+\lambda)\|/(1+\|y\|),\\
&r_3=\|u-{\rm\Pi}_{\mathcal{S}}(u+\mu)\|/(1+\|u\|),\\
&r_4=\|K_hy-M_hu-M_hf\|/(1+\|M_hf\|).
\end{aligned}
\end{equation*}

To show the efficiency of the FE-dABCD algorithm, we will compare it with 
 alternating direction method of multipliers (ADMM)  and inexact heterogeneous alternating direction method of multipliers (ihADMM) (see \cite{Song2017A, Chen2016A}) applied to the primal problem. First we give the frame of classical ADMM algorithm and ihADMM algorithm for (\ref{eqn:discretized problem matrix_vector}).
 
\begin{algorithm}[H]\footnotesize
\caption{ADMM algorithm for (\ref{eqn:discretized problem matrix_vector})}
\label{algo:ADMM for problem P_h}
\leftline{Initialization: Give initial point $(\lambda^0, \mu^0, z^0, w^0)$ and a tolerant parameter $\tau >0$. Set $k=0$.}
\begin{description}
\item[\textbf{Step 1}] Compute $(y^{k+1},u^{k+1},p^{k+1})$ through solving the following equation system
\begin{equation*}
\left[
  \begin{array}{ccc}
    {M_h}+\sigma I & 0 &K_h^T \\
   0 & \alpha M_h+\sigma I & -{M_h} \\
   K_h & -M_h & 0 \\
  \end{array}
\right]\left[
         \begin{array}{c}
           y^{k+1} \\
           u^{k+1} \\
           p^{k+1} \\         
         \end{array}
       \right]
       =\left[
                 \begin{array}{c}
                   {M_h}y_d-{\lambda}^{k}+\sigma z^k \\
                   \sigma w^k-{\mu}^k \\
                   M_h f\\
                 \end{array}
               \right].
\end{equation*}
\item[\textbf{Step 2}] Compute $(z^{k+1},w^{k+1})$ as follows
       \begin{eqnarray*}
       z^{k+1}&={\rm\Pi}_{\mathcal{C}}\left(y^{k+1}+\frac{1}{\sigma}{\lambda}^k\right),\\
       w^{k+1}&={\rm\Pi}_{\mathcal{S}}\left(u^{k+1}+\frac{1}{\sigma}{\mu}^k\right).
       \end{eqnarray*}
\item[\textbf{Step 3}] Compute $({\lambda}^{k+1},\mu^{k+1})$ as follows
\begin{eqnarray*}
    \lambda^{k+1} &= \lambda^k+\sigma(y^{k+1}-z^{k+1}),\\
    \mu^{k+1} &= \mu^k+\sigma(u^{k+1}-w^{k+1}).
\end{eqnarray*}
\item[\textbf{Step 4}] If a termination criterion is met, Stop; else, set $k:=k+1$ and go to Step 1.
\end{description}
\end{algorithm}

\begin{algorithm}[H]\footnotesize
\caption{ihADMM algorithm for (\ref{eqn:discretized problem matrix_vector})}
\label{algo:ihADMM for problem P_h}
\leftline{Initialization: Give initial point $(\lambda^0, \mu^0, z^0, w^0)$ and a tolerant parameter $\tau >0$. Set $k=0$.}
\begin{description}
\item[\textbf{Step 1}] Compute $(y^{k+1},p^{k+1})$ through solving the following equation system
\begin{equation*}
\left[
  \begin{array}{cc}
    (1+\sigma) {M_h} & K_h^T \\
    K_h & -\frac{1}{\alpha+\sigma}{M_h} \\
  \end{array}
\right]\left[
         \begin{array}{c}
           y^{k+1} \\
           p^{k+1} \\
         \end{array}
       \right]
       =\left[
                 \begin{array}{c}
                   {M_h}(y_d-{\lambda}^k+\sigma z^k) \\
                   \frac{1}{\alpha+\sigma}M_h(\sigma w^k-\mu^k)+M_hf\\
                 \end{array}
               \right].
\end{equation*}
Compute $u^{k+1}$ as follows
\begin{equation*}
u^{k+1}=\frac{1}{\sigma+\alpha}(p^{k+1}+\sigma w^k-\mu^k).
\end{equation*}
\item[\textbf{Step 2}] Compute $(z^{k+1},w^{k+1})$ as follows
       \begin{eqnarray*}
       z^{k+1}&={\rm\Pi}_{\mathcal{C}}\left(y^{k+1}+\frac{1}{\sigma}W_h^{-1}M_h{\lambda}^k\right),\\
       w^{k+1}&={\rm\Pi}_{\mathcal{S}}\left(u^{k+1}+\frac{1}{\sigma}W_h^{-1}M_h{\mu}^k\right).
       \end{eqnarray*}
\item[\textbf{Step 3}] Compute $({\lambda}^{k+1},\mu^{k+1})$ as follows
\begin{eqnarray*}
    \lambda^{k+1} &= \lambda^k+\sigma(y^{k+1}-z^{k+1}),\\
    \mu^{k+1} &= \mu^k+\sigma(u^{k+1}-w^{k+1}).
\end{eqnarray*}
\item[\textbf{Step 4}] If a termination criterion is met, Stop; else, set $k:=k+1$ and go to Step 1.
\end{description}
\end{algorithm}

For the classical ADMM algorithm, the accuracy of a numerical solution is measured by the following residual
\begin{equation}
\eta_c=\max\{\gamma_1,\ \gamma_2,\ \gamma_3,\ \gamma_4,\ \gamma_5,\ \gamma_6,\ \gamma_7\},
\end{equation}
where
\begin{equation*}
\begin{aligned}
&\gamma_1=\|M_h(y-y_d)+K_h^Tp+\lambda\|/(1+\|M_hy_d\|),\\
&\gamma_2=\|\alpha M_h u-M_h^Tp+\mu\|/(1+\|M_hu\|),\\
&\gamma_3=\|K_hy-M_hu-M_hf\|/(1+\|M_hf\|),\\
&\gamma_4=\|y-z\|/(1+\|y\|),\\
&\gamma_5=\|u-w\|/(1+\|u\|),\\
&\gamma_6=\|z-{\rm\Pi}_{\mathcal{C}}(z+\lambda)\|/(1+\|z\|),\\
&\gamma_7=\|w-{\rm\Pi}_{\mathcal{S}}(w+\mu)\|/(1+\|w\|).
\end{aligned}
\end{equation*}
For the inexact heterogeneous ADMM (ihADMM) algorithm, the accuracy of a numerical solution is measured by the following residual
\begin{equation}
\eta_h=\max\{\zeta_1,\ \zeta_2,\ \zeta_3,\ \zeta_4,\ \zeta_5,\ \zeta_6,\ \zeta_7\},
\end{equation}
where
\begin{equation*}
\begin{aligned}
&\zeta_1=\|M_h(y-y_d)+K_h^Tp+M_h\lambda\|/(1+\|M_hy_d\|),\\
&\zeta_2=\|\alpha M_h u-M_h^Tp+M_h\mu\|/(1+\|M_hu\|),\\
&\zeta_3=\|K_hy-M_hu-M_hf\|/(1+\|M_hf\|),\\
&\zeta_4=\|M_h(y-z)\|/(1+\|y\|),\\
&\zeta_5=\|M_h(u-w)\|/(1+\|u\|),\\
&\zeta_6=\|z-{\rm\Pi}_{\mathcal{C}}(z+M_h\lambda)\|/(1+\|z\|),\\
&\zeta_7=\|w-{\rm\Pi}_{\mathcal{S}}(w+M_h\mu)\|/(1+\|w\|).
\end{aligned}
\end{equation*}
Let $\epsilon$ be a given accuracy tolerance and $k_m$ be a given maximum iteration times, then the terminal condition is $\eta_d\ (\eta_c,\ \eta_h)<\epsilon$ or $k>k_m$. 

There are two examples in this section. In the first example, the exact control and exact state are known, while for the second one, only the desired state $y_d$ is known. In both two examples, we compare FE-dABCD algorithm with ihADMM algorithm and ADMM algorithm on some convergence behavior, including the times of iteration, residual $\eta$ and CPU time. In both two examples, `$\#$dofs' denotes the dimension of the control variable on each grid level, `iter' represents the times of iteration and `residual' represents the precision $\eta_d\ (\eta_c,\ \eta_h)$ of the numerical algorithm, which are defined above. 

\begin{example}\label{example1}
We now consider problem (\ref{original problem}) with the following data, $\mathrm{\Omega}=B_2(0)\subset \mathbb{R}^2$, $\alpha=1$,
\begin{equation*}
\mathcal{W}=\{u\in L^{\infty}(\mathrm{\Omega})|\ -2\leq u \leq 2\quad\mathrm{a.e.}\ x \in \mathrm{\Omega}\},\quad\mathcal{K}=\{\mathbf{z}\in {L^{2}(\bar{\mathrm{\Omega}})}^{2}|\int_{\mathrm{\Omega}}\ |\mathbf{z}(x)|^2dx\leq2\}
\end{equation*}
as well as
\begin{equation*}
 y_d(x):=\left\{ \begin{aligned}
         \frac{1}{4}+\frac{1}{2}\log 2-\frac{1}{4}|x|^2,&\qquad 0\leq|x|\leq1, \\
         \frac{1}{2}\log 2-\frac{1}{2}\log |x|,\ \ &\qquad 1\leq|x|\leq2.
                          \end{aligned} \right.
\end{equation*}
We consider the problem
\begin{equation*}
\begin{aligned}
                -\mathrm{\Delta} y&=u+f\quad\mathrm{in}\  \mathrm{\Omega}, \\
         y&=0\ \ \qquad\mathrm{on}\ \mathrm{\Gamma},\\
\end{aligned}
\end{equation*}
where
\begin{equation*}
 f(x):=\left\{ \begin{aligned}
         \frac{5}{4}+\frac{1}{2}\log 2-\frac{1}{4}|x|^2,&\qquad 0\leq|x|\leq1, \\
         \frac{1}{2}\log 2-\frac{1}{2}\log |x|,\ \ &\qquad 1\leq|x|\leq2.
                          \end{aligned} \right.
\end{equation*}
The optimization problem then has the unique solution
\begin{equation*}
 u(x):=\left\{ \begin{aligned}
         -\frac{1}{4}-\frac{1}{2}\log 2+\frac{1}{4}|x|^2,&\qquad 0\leq|x|\leq1, \\
         -\frac{1}{2}\log 2+\frac{1}{2}\log |x|,\ \ &\qquad 1\leq|x|\leq2.
                          \end{aligned} \right.
\end{equation*}
with the corresponding state $y\equiv y_d$. It is easy to see that the bounds on the control are not active, then from (\ref{optimal condition 5}) we obtain that $p=-u$.

\begin{table}[H]\small
\ \\
\ \\
\caption{The convergence behavior of FE-dABCD algorithm, ihADMM algorithm and ADMM algorithm for Example \ref{example1}.}
\label{table1}
\begin{center}
\begin{tabular}{@{\extracolsep{\fill}}ccccccccccccccccc}
\hline
\multirow{2}{*}{$h$}                   &&&\multirow{2}{*}{$\#$dofs}     &&&                                                    &&& \multirow{2}{*}{FE-dABCD}&&& \multirow{2}{*}{ihADMM}&&& \multirow{2}{*}{ADMM}\\
&&&&&&&&&&&&&&&\\
\hline
                                                  &&&                                            &&&\multirow{2}{*}{iter}                      &&&\multirow{2}{*}{15}&&&\multirow{2}{*}{33}&&&\multirow{2}{*}{33}\\
\multirow{2}{*}{$\frac{1}{2^3}$}  &&&\multirow{2}{*}{273}            &&&\multirow{2}{*}{residual $\eta$}    &&&\multirow{2}{*}{8.31e-05}&&&\multirow{2}{*}{9.86e-05}&&&\multirow{2}{*}{9.17e-05}\\
                                                  &&&                                           &&&\multirow{2}{*}{time/s}                   &&&\multirow{2}{*}{0.1973}&&&\multirow{2}{*}{0.3298}&&&\multirow{2}{*}{0.3544}\\
&&&&&&&&&&&&&&&\\

                                                  &&&                                            &&&\multirow{2}{*}{iter}                      &&&\multirow{2}{*}{13}&&&\multirow{2}{*}{32}&&&\multirow{2}{*}{45}\\
\multirow{2}{*}{$\frac{1}{2^4}$}  &&&\multirow{2}{*}{1145}          &&&\multirow{2}{*}{residual $\eta$}    &&&\multirow{2}{*}{9.58e-05}&&&\multirow{2}{*}{9.57e-05}&&&\multirow{2}{*}{9.82e-05}\\
                                                  &&&                                           &&&\multirow{2}{*}{time/s}                   &&&\multirow{2}{*}{0.8488}&&&\multirow{2}{*}{1.9684}&&&\multirow{2}{*}{2.9272}\\
&&&&&&&&&&&&&&&\\
                                                  &&&                                            &&&\multirow{2}{*}{iter}                      &&&\multirow{2}{*}{14}&&&\multirow{2}{*}{32}&&&\multirow{2}{*}{56}\\
\multirow{2}{*}{$\frac{1}{2^5}$}  &&&\multirow{2}{*}{4689}          &&&\multirow{2}{*}{residual $\eta$}    &&&\multirow{2}{*}{9.54e-05}&&&\multirow{2}{*}{9.12e-05}&&&\multirow{2}{*}{9.92e-05}\\
                                                  &&&                                           &&&\multirow{2}{*}{time/s}                   &&&\multirow{2}{*}{10.1207}&&&\multirow{2}{*}{19.5189}&&&\multirow{2}{*}{34.7385}\\
&&&&&&&&&&&&&&&\\
                                                  &&&                                            &&&\multirow{2}{*}{iter}                      &&&\multirow{2}{*}{15}&&&\multirow{2}{*}{33}&&&\multirow{2}{*}{71}\\
\multirow{2}{*}{$\frac{1}{2^6}$}  &&&\multirow{2}{*}{18977}        &&&\multirow{2}{*}{residual $\eta$}    &&&\multirow{2}{*}{9.58e-05}&&&\multirow{2}{*}{8.09e-05}&&&\multirow{2}{*}{9.98e-05}\\
                                                  &&&                                           &&&\multirow{2}{*}{time/s}                   &&&\multirow{2}{*}{72.2546}&&&\multirow{2}{*}{107.985}&&&\multirow{2}{*}{237.331}\\
&&&&&&&&&&&&&&&\\
                                                  &&&                                            &&&\multirow{2}{*}{iter}                      &&&\multirow{2}{*}{14}&&&\multirow{2}{*}{32}&&&\multirow{2}{*}{88}\\
\multirow{2}{*}{$\frac{1}{2^7}$}  &&&\multirow{2}{*}{76353}        &&&\multirow{2}{*}{residual $\eta$}    &&&\multirow{2}{*}{8.04e-05}&&&\multirow{2}{*}{5.71e-05}&&&\multirow{2}{*}{5.88e-05}\\
                                                  &&&                                           &&&\multirow{2}{*}{time/s}                 &&&\multirow{2}{*}{854.684}&&&\multirow{2}{*}{1169.12}&&&\multirow{2}{*}{3259.29}\\
&&&&&&&&&&&&&&&\\
\hline
\end{tabular}
\end{center}
\end{table}

In this numerical experiment, we first focus on the convergence behavior of FE-dABCD algorithm compared with ihADMM algorithm and ADMM algorithm. In this case, we set $\epsilon=10^{-4}$ and $k_m=100$. That is to say, the algorithm is terminated when $\eta_d\ (\eta_c,\ \eta_h)<10^{-4}$ or ${\rm{iter}}>100$. The corresponding convergence behavior, including the times of iteration, residual $\eta_d\ (\eta_c,\ \eta_h)$ and CPU time, of FE-dABCD algorithm, ihADMM algorithm and ADMM algorithm are given in Table \ref{table1}. And as an example, the figures of exact state $y$, numerical state $y_{h}$ and exact control $u$, numerical control $u_{h}$ on the grid of size $h=\frac{1}{2^5}$ are displayed in Figure \ref{example:y} and Figure \ref{example:u} respectively. Then we test this problem with different values of $\alpha$ on the grid of size $h=\frac{1}{2^6}$ to show the robustness of our proposed FE-dABCD algorithm. In this case, we still use the $y_d$ defined above and set $a=-\frac{1}{2}$, $b=\frac{1}{2}$, $\delta=1$, $\epsilon=10^{-4}$, $k_m=100$ and let $\alpha$ range from $10^{-2}$ to $10^{-5}$.

The results in Table \ref{table1} show that the number of iteration of FE-dABCD algorithm is independent of the discretization level. It is easy to see from Table \ref{table1} that the number of iteration of FE-dABCD for five discretization levels are 15, 13, 14, 15 and 14 respectively. From Table \ref{table1}, we can also verify the efficiency of our proposed FE-dABCD algorithm. The results for testing the problem with different values of $\alpha$ are presented in Table \ref{table2}. Although as $\alpha$ changes from $10^{-2}$ to $10^{-5}$, the number of iteration of the FE-dABCD algorithm increases, it does not change that dramatically. The FE-dABCD algorithm could solve (\ref{dual problem}) for all tested values of $\alpha$ in $30$ iterations, which shows the robustness of FE-dABCD algorithm with respect to $\alpha$.

\begin{figure}[H]
\centering
\subfigure[exact state $y$ ]{
\includegraphics[width=0.49\textwidth]{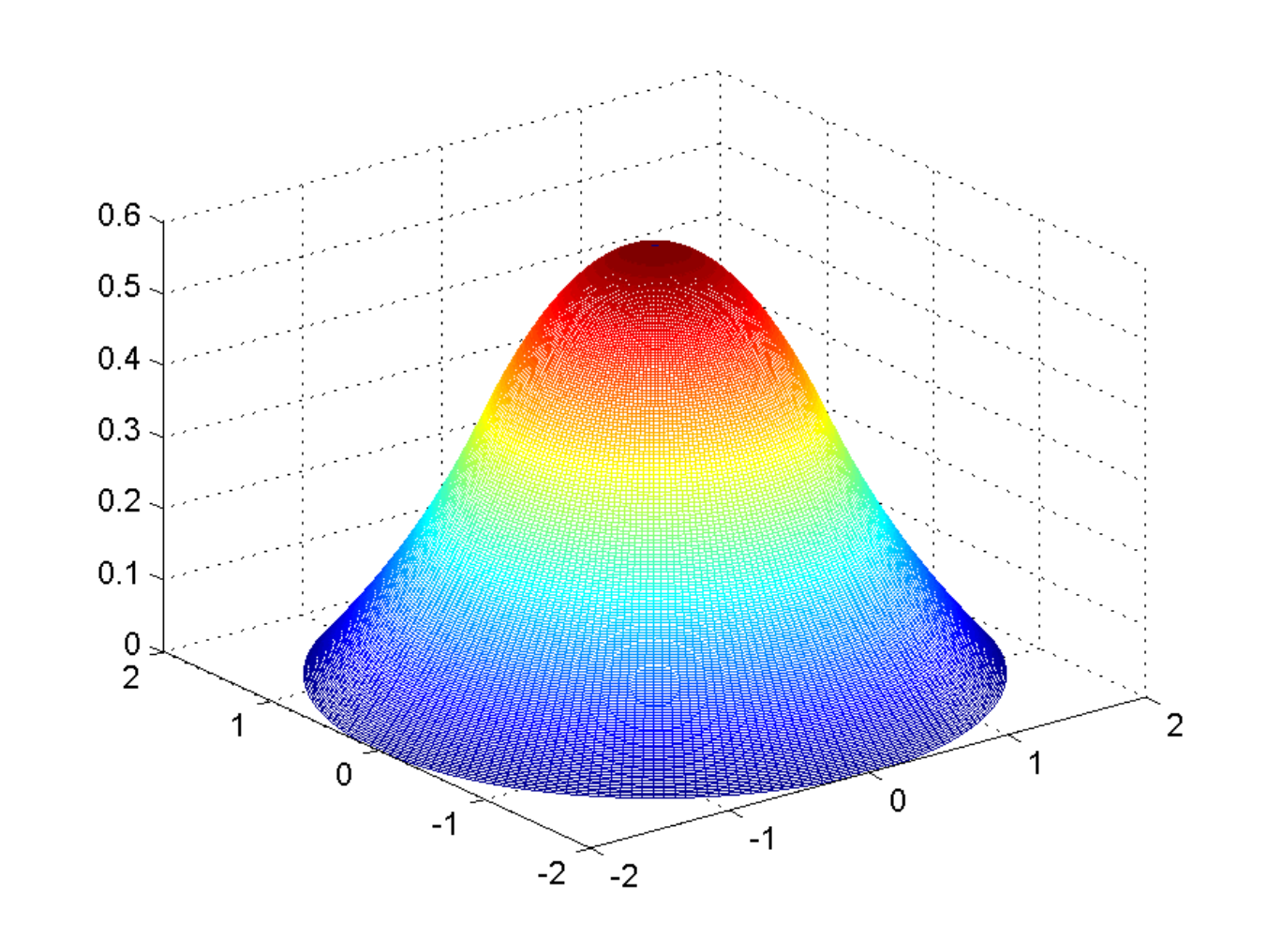}}
\subfigure[numerical state $y_{h}$ ]{
\includegraphics[width=0.49\textwidth]{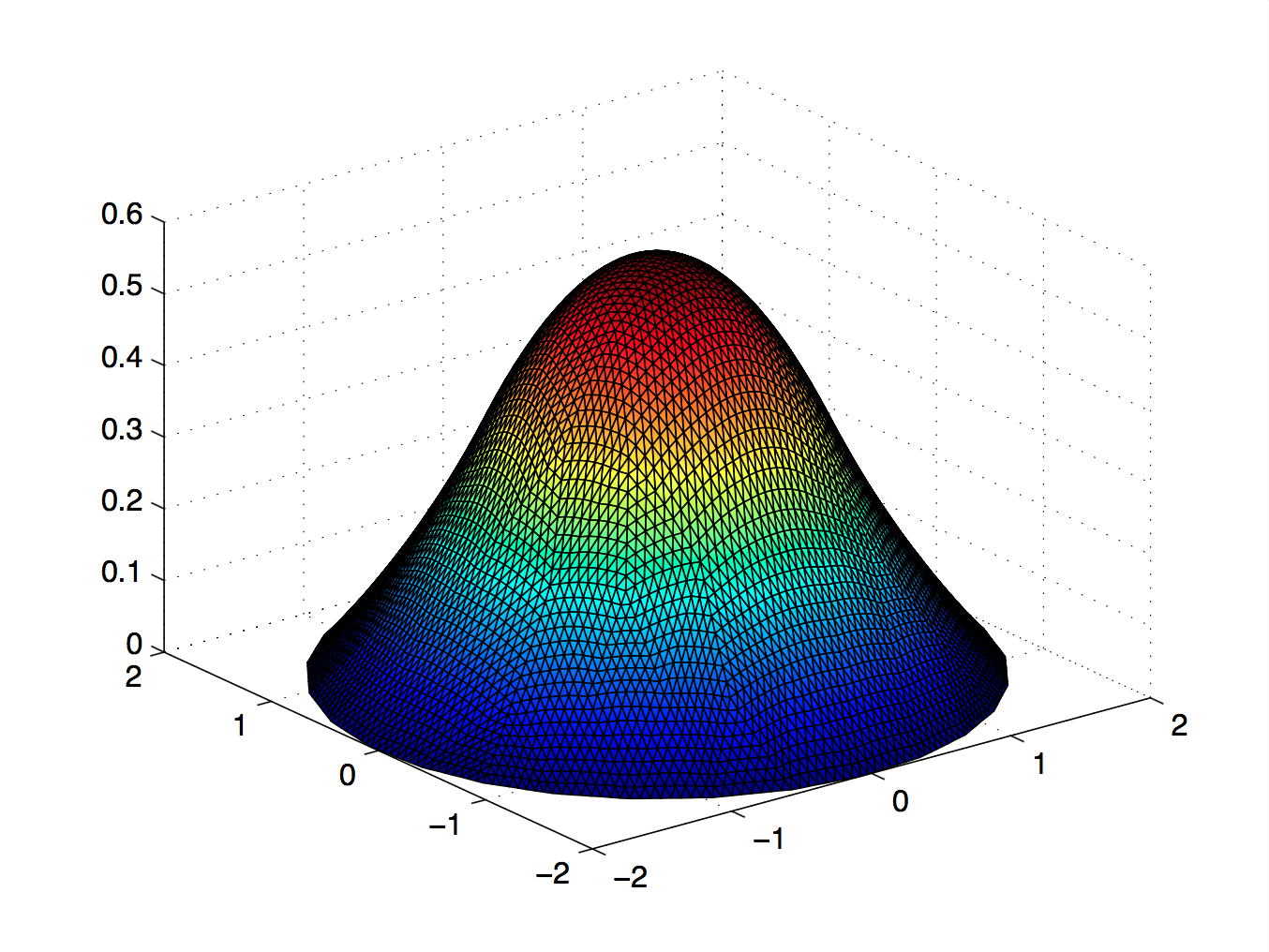}}
\caption{Figures of exact and numerical state on the grid of size $h=\frac{1}{2^5}$ for Example \ref{example1}.}
\label{example:y}
\end{figure}

\begin{figure}[H]
\centering
\subfigure[exact control $u$ ]{
\includegraphics[width=0.49\textwidth]{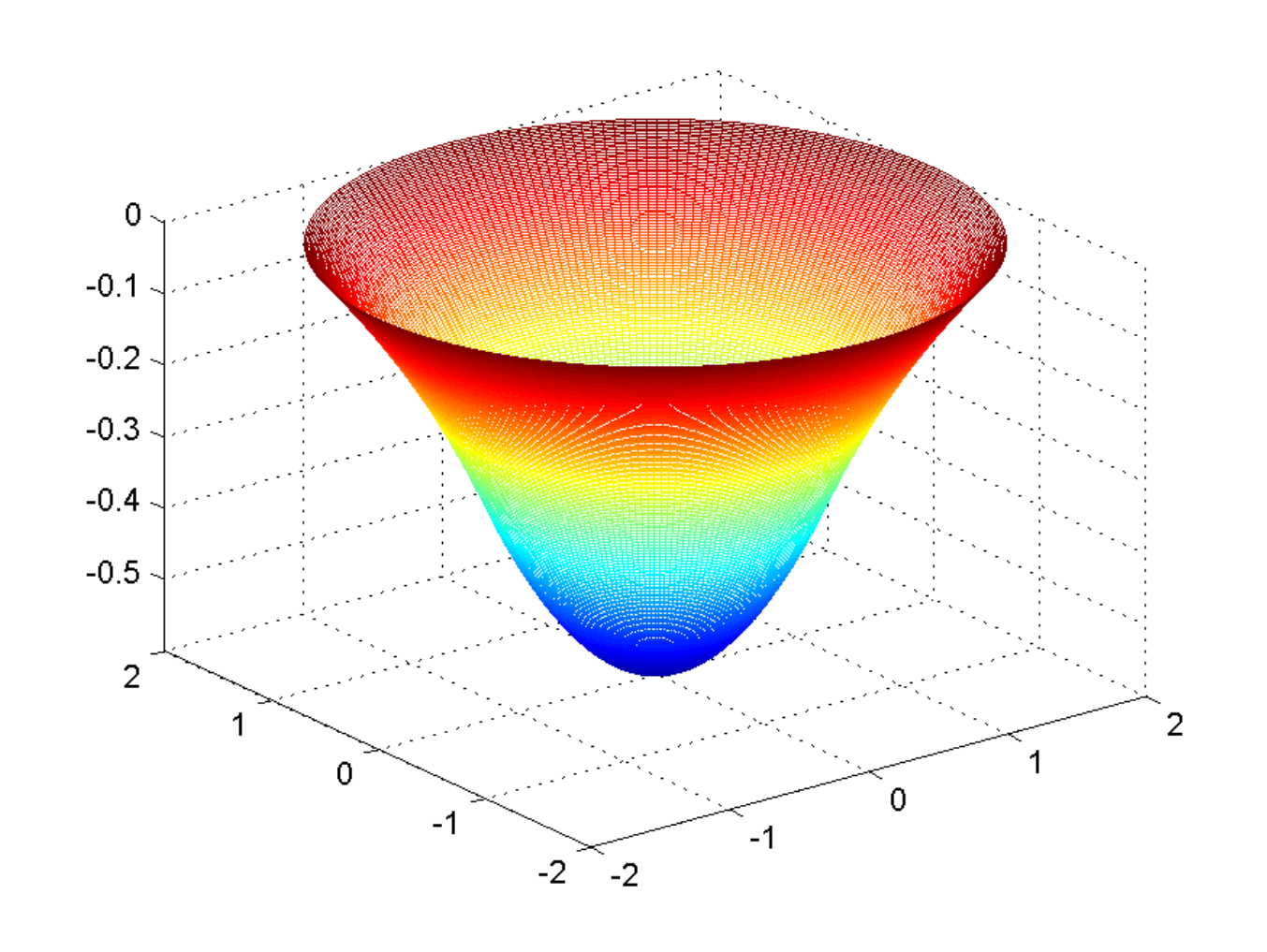}}
\subfigure[numerical control $u_{h}$ ]{
\includegraphics[width=0.49\textwidth]{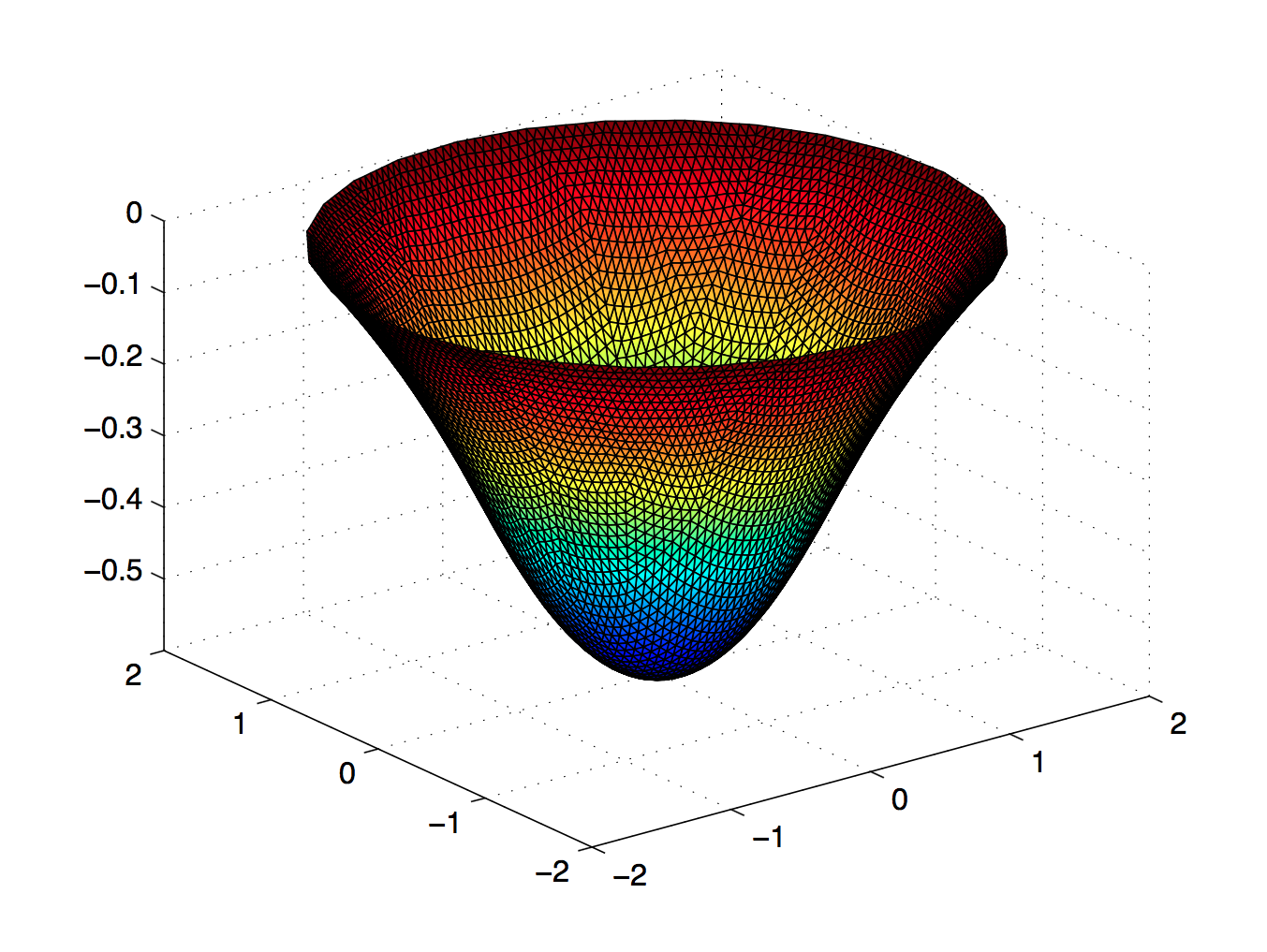}}
\caption{Figures of exact and numerical control on the grid of size $h=\frac{1}{2^5}$ for Example \ref{example1}.}
\label{example:u}
\end{figure}

\begin{table}[H]\small
\ \\
\ \\
\caption{The performance of FE-dABCD algorithm for (\ref{dual problem}) with different values of $\alpha$.}
\label{table3}
\begin{center}
\begin{tabular}{@{\extracolsep{\fill}}cccccccccc}
\hline
\multirow{2}{*}{$h$}                   &&&\multirow{2}{*}{$\alpha$}    &&& \multirow{2}{*}{iter}&&& \multirow{2}{*}{residual $\eta_d$}\\
&&&&&&&&&\\
\hline
&&&&&&&&&\\
\multirow{2}{*}{$\frac{1}{2^6}$}  &&&\multirow{2}{*}{$10^{-2}$}                &&&\multirow{2}{*}{16}&&&\multirow{2}{*}{9.28e-05}\\
&&&&&&&&&\\
&&&&&&&&&\\
                                                  &&&\multirow{2}{*}{$5\times10^{-3}$}           &&&\multirow{2}{*}{17}&&&\multirow{2}{*}{9.55e-05}\\
&&&&&&&&&\\
&&&&&&&&&\\
                                                  &&&\multirow{2}{*}{$10^{-3}$}          &&&\multirow{2}{*}{19}&&&\multirow{2}{*}{8.48e-05}\\
&&&&&&&&&\\
&&&&&&&&&\\
                                                  &&&\multirow{2}{*}{$5\times10^{-4}$}            &&&\multirow{2}{*}{20}&&&\multirow{2}{*}{7.47e-05}\\
&&&&&&&&&\\
&&&&&&&&&\\
                                                  &&&\multirow{2}{*}{$10^{-4}$}            &&&\multirow{2}{*}{21}&&&\multirow{2}{*}{9.27e-05}\\
&&&&&&&&&\\
&&&&&&&&&\\
                                                  &&&\multirow{2}{*}{$5\times10^{-5}$}            &&&\multirow{2}{*}{22}&&&\multirow{2}{*}{9.98e-05}\\
&&&&&&&&&\\
&&&&&&&&&\\
                                                  &&&\multirow{2}{*}{$10^{-5}$}            &&&\multirow{2}{*}{25}&&&\multirow{2}{*}{3.81e-05}\\
&&&&&&&&&\\
&&&&&&&&&\\
\hline
\end{tabular}
\end{center}
\end{table}

\end{example}

\begin{example}\label{example2}
In this example, we consider problem (\ref{original problem}) with the following data, $\mathrm{\Omega}=B_1(0)\subset \mathbb{R}^2$,
\begin{equation*}
\mathcal{W}=\{u\in L^{\infty}(\mathrm{\Omega})|\ 0\leq u \leq \frac{1}{2}\quad\mathrm{a.e.}\ x \in \mathrm{\Omega}\},\quad\mathcal{K}=\{\mathbf{z}\in {L^{2}(\bar{\mathrm{\Omega}})}^{2}|\int_{\mathrm{\Omega}}\ |\mathbf{z}(x)|^2dx\leq\frac{1}{2}\}
\end{equation*}
as well as
\begin{equation*}
y_d(x):=|x|^2,\qquad 0\leq|x|\leq1.
\end{equation*}
We consider the problem
\begin{equation*}
\begin{aligned}
                -\mathrm{\Delta} y&=u\quad\mathrm{in}\  \mathrm{\Omega}, \\
         y&=0\quad\mathrm{on}\ \mathrm{\Gamma},\\
\end{aligned}
\end{equation*}
which means $f(x)=0$.

In this example, we still first focus on the convergence behavior of FE-dABCD algorithm compared with ihADMM algorithm and ADMM algorithm. In this case, we set $\alpha=10^{-2}$, $\epsilon=10^{-4}$ and $k_m=100$. The relative results are given in Table \ref{table2}. Then, similar to Example \ref{example1}, to show the robustness of our proposed FE-dABCD algorithm with respect to the parameter $\alpha$, we will also test the same problem with different values of $\alpha$, ranging from $10^{-2}$ to $10^{-5}$, on the grid of size $h=\frac{1}{2^6}$ and the corresponding results are presented in Table \ref{table4}. 

It is clear from Table \ref{table2} that the efficiency of our proposed FE-dABCD algorithm compared with ihADMM algorithm and ADMM algorithm. We can also see from the results in Table \ref{table2} that the number of iteration of FE-dABCD algorithm is independent of the discretization level. And from the results in Table \ref{table4}, we can find that although the number of iterations of our proposed FE-dABCD algorithm increases obviously when $\alpha$ changes from $10^{-2}$ to $10^{-5}$, it still could solve problem (\ref{dual problem}) for all tested values of $\alpha$ in $50$ iterations.

\begin{table}[H]\small
\ \\
\ \\
\caption{The convergence behavior of FE-dABCD algorithm, ihADMM algorithm and ADMM algorithm for Example \ref{example2}.}
\label{table2}
\begin{center}
\begin{tabular}{@{\extracolsep{\fill}}ccccccccccccccccc}
\hline
\multirow{2}{*}{$h$}                   &&&\multirow{2}{*}{$\#$dofs}     &&&                                                    &&& \multirow{2}{*}{FE-dABCD}&&& \multirow{2}{*}{ihADMM}&&& \multirow{2}{*}{ADMM}\\
&&&&&&&&&&&&&&&\\
\hline
                                                  &&&                                            &&&\multirow{2}{*}{iter}                      &&&\multirow{2}{*}{10}&&&\multirow{2}{*}{30}&&&\multirow{2}{*}{30}\\
\multirow{2}{*}{$\frac{1}{2^3}$}  &&&\multirow{2}{*}{273}            &&&\multirow{2}{*}{residual $\eta$}    &&&\multirow{2}{*}{7.83e-05}&&&\multirow{2}{*}{2.74e-05}&&&\multirow{2}{*}{7.74e-05}\\
                                                  &&&                                           &&&\multirow{2}{*}{time/s}                   &&&\multirow{2}{*}{0.0589}&&&\multirow{2}{*}{0.0669}&&&\multirow{2}{*}{0.0692}\\
&&&&&&&&&&&&&&&\\

                                                  &&&                                            &&&\multirow{2}{*}{iter}                      &&&\multirow{2}{*}{9}&&&\multirow{2}{*}{33}&&&\multirow{2}{*}{36}\\
\multirow{2}{*}{$\frac{1}{2^4}$}  &&&\multirow{2}{*}{1145}          &&&\multirow{2}{*}{residual $\eta$}    &&&\multirow{2}{*}{3.56e-05}&&&\multirow{2}{*}{9.21e-05}&&&\multirow{2}{*}{2.06e-05}\\
                                                  &&&                                           &&&\multirow{2}{*}{time/s}                   &&&\multirow{2}{*}{0.2437}&&&\multirow{2}{*}{0.3238}&&&\multirow{2}{*}{0.4969}\\
&&&&&&&&&&&&&&&\\
                                                  &&&                                            &&&\multirow{2}{*}{iter}                      &&&\multirow{2}{*}{8}&&&\multirow{2}{*}{32}&&&\multirow{2}{*}{47}\\
\multirow{2}{*}{$\frac{1}{2^5}$}  &&&\multirow{2}{*}{4689}          &&&\multirow{2}{*}{residual $\eta$}    &&&\multirow{2}{*}{7.57e-05}&&&\multirow{2}{*}{9.66e-05}&&&\multirow{2}{*}{8.96e-05}\\
                                                  &&&                                           &&&\multirow{2}{*}{time/s}                   &&&\multirow{2}{*}{0.9482}&&&\multirow{2}{*}{1.7690}&&&\multirow{2}{*}{3.6804}\\
&&&&&&&&&&&&&&&\\
                                                  &&&                                            &&&\multirow{2}{*}{iter}                      &&&\multirow{2}{*}{9}&&&\multirow{2}{*}{31}&&&\multirow{2}{*}{55}\\
\multirow{2}{*}{$\frac{1}{2^6}$}  &&&\multirow{2}{*}{18977}        &&&\multirow{2}{*}{residual $\eta$}    &&&\multirow{2}{*}{3.39e-05}&&&\multirow{2}{*}{9.77e-05}&&&\multirow{2}{*}{2.17e-05}\\
                                                  &&&                                           &&&\multirow{2}{*}{time/s}                   &&&\multirow{2}{*}{6.2710}&&&\multirow{2}{*}{8.2806}&&&\multirow{2}{*}{24.7473}\\
&&&&&&&&&&&&&&&\\
                                                  &&&                                            &&&\multirow{2}{*}{iter}                      &&&\multirow{2}{*}{7}&&&\multirow{2}{*}{28}&&&\multirow{2}{*}{69}\\
\multirow{2}{*}{$\frac{1}{2^7}$}  &&&\multirow{2}{*}{76353}        &&&\multirow{2}{*}{residual $\eta$}    &&&\multirow{2}{*}{8.36e-05}&&&\multirow{2}{*}{9.35e-05}&&&\multirow{2}{*}{6.49e-05}\\
                                                  &&&                                           &&&\multirow{2}{*}{time/s}                 &&&\multirow{2}{*}{32.7311}&&&\multirow{2}{*}{40.2470}&&&\multirow{2}{*}{217.994}\\
&&&&&&&&&&&&&&&\\
\hline
\end{tabular}
\end{center}
\end{table}

\begin{table}[H]\small
\ \\
\ \\
\caption{The performance of FE-dABCD algorithm for (\ref{dual problem}) with different values of $\alpha$.}
\label{table4}
\begin{center}
\begin{tabular}{@{\extracolsep{\fill}}cccccccccc}
\hline
\multirow{2}{*}{$h$}                   &&&\multirow{2}{*}{$\alpha$}    &&& \multirow{2}{*}{iter}&&& \multirow{2}{*}{residual $\eta_d$}\\
&&&&&&&&&\\
\hline
&&&&&&&&&\\
\multirow{2}{*}{$\frac{1}{2^6}$}  &&&\multirow{2}{*}{$10^{-2}$}                &&&\multirow{2}{*}{9}&&&\multirow{2}{*}{3.39e-05}\\
&&&&&&&&&\\
&&&&&&&&&\\
                                                  &&&\multirow{2}{*}{$5\times10^{-3}$}           &&&\multirow{2}{*}{11}&&&\multirow{2}{*}{4.25e-05}\\
&&&&&&&&&\\
&&&&&&&&&\\
                                                  &&&\multirow{2}{*}{$10^{-3}$}          &&&\multirow{2}{*}{15}&&&\multirow{2}{*}{7.38e-05}\\
&&&&&&&&&\\
&&&&&&&&&\\
                                                  &&&\multirow{2}{*}{$5\times10^{-4}$}            &&&\multirow{2}{*}{21}&&&\multirow{2}{*}{7.71e-05}\\
&&&&&&&&&\\
&&&&&&&&&\\
                                                  &&&\multirow{2}{*}{$10^{-4}$}            &&&\multirow{2}{*}{30}&&&\multirow{2}{*}{7.38e-05}\\
&&&&&&&&&\\
&&&&&&&&&\\
                                                  &&&\multirow{2}{*}{$5\times10^{-5}$}            &&&\multirow{2}{*}{39}&&&\multirow{2}{*}{7.71e-05}\\
&&&&&&&&&\\
&&&&&&&&&\\
                                                  &&&\multirow{2}{*}{$10^{-5}$}            &&&\multirow{2}{*}{46}&&&\multirow{2}{*}{7.38e-05}\\
&&&&&&&&&\\
&&&&&&&&&\\
\hline
\end{tabular}
\end{center}
\end{table}

\end{example}

\section{Conclusion}\label{Conclusion}
In this paper, we impose integral constraint on the gradient of the state and box constraints on the control. Our main results are proving the optimal conditions for the optimal control problem and also giving an efficient finite element duality-based inexact majorized accelerated block coordinate descent (FE-dABCD) algorithm. We consider piecewise linear approximation for both the state and the control and then transform the discretized problem into a multi-block unconstrained optimization problem by its dual. Our proposed method employs a majorization technique, which allow us to flexibly choose different proximal terms for different subproblems. Additionally, each subproblem only has to be solved approximately thanks to the inexactness of our proposed method. These flexibilities make each subproblem keep good structure and can be solved efficiently and reduce the cost for solving the subproblems largely, which improve the efficiency of our proposed method greatly. Specifically, we solve the smooth subproblem by GMRES method with preconditioner and solve nonsmooth subproblems through introducing appropriate proximal terms and semi-smooth Newton (SSN) method. We proved that the FE-dABCD algorithm enjoys $O(\frac{1}{k^2})$ iteration complexity. It is also easy to see the efficiency of FE-dABCD algorithm from the numerical results.

\begin{acknowledgements}
We would like to thank Prof. Long Chen very much for the contribution of the FEM package iFEM \cite{Chen} in Matlab. 
\end{acknowledgements}



\end{document}